%% file: main.tex
\documentclass{article}
\pdfoutput=1	
\usepackage{fullpage}	
\usepackage[breaklinks,hidelinks]{hyperref}
\hypersetup{
  pdftitle={Computable Quantification in Reflective Grounded Arithmetic},
  pdfauthor={Bryan Ford}}

\usepackage{amsmath}	
\usepackage{amssymb}	
\usepackage{amsthm}

\newtheorem{thm}{Theorem}[section]
\newtheorem{cor}{Corollary}[section]

\input{setup}

\crefname{thm}{Theorem}{Theorems}
\crefname{cor}{Corollary}{Corollaries}
\crefname{lem}{Lemma}{Lemmas}
\crefname{prop}{Proposition}{Propositions}
\crefname{defi}{Definition}{Definitions}
\crefname{rem}{Remark}{Remarks}
\crefname{exa}{Example}{Examples}

\newcommand{\cmp}{\mathit{cmp}}		
\newcommand{\den}[1]{\llbracket #1 \rrbracket}	
\newcommand{\cA}{\mathcal A}		
\newcommand{\hs}{\mathrel{\triangleright}}	
\newcommand{\lft}[1]{{\uparrow}#1}	
\newcommand{\vz}{v_0}			
\newcommand{\bev}[1]{\Downarrow_{#1}}	
\newcommand{\hev}[1]{\Downarrow^{\mathrm{hd}}_{#1}}	
\newcommand{\wbev}[1]{\Downarrow^{\mathrm{w}}_{#1}}	
\newcommand{\whev}[1]{\Downarrow^{\mathrm{w,hd}}_{#1}}	
\newcommand{\wf}{\mathrm{wf}}		
\newcommand{\fv}{\mathrm{fv}}		
\newcommand{\proves}{\mathrel{\rhd}}	
\newcommand{\natbot}{\mathbb{N}_\bot}	
\newcommand{\tv}[1]{\lceil #1 \rceil}	
\newcommand{\tvT}{\mathsf{tt}}		
\newcommand{\tvF}{\mathsf{ff}}		

\begin{document}

\title{Computable Quantification in \\ Reflective Grounded Arithmetic}
\author{Bryan Ford \\ EPFL}
\date{\today}	
\maketitle
\input{abs}

%
%

\input{intro}
\input{bg}
\input{system}
\input{metatheory}
\input{power}
\input{omega}
\input{character}
\input{related}
\input{concl}
\input{formal}

\bibliographystyle{plain}
\bibliography{logic}

\end{document}

%% file: setup.tex
\usepackage{times}
\usepackage{xspace}
\usepackage{cite}		
\usepackage{bussproofs}
\usepackage{stmaryrd}
\usepackage[noabbrev,capitalize]{cleveref}
\usepackage{thmtools}
\usepackage{xstring}
\usepackage{graphicx}
\usepackage{subcaption}
\usepackage{nicematrix}		
\usepackage{longtable}
\usepackage{pifont}

\usepackage{isabelle}
\usepackage{isabellesym}
\usepackage{pdfpages}
\usepackage{ragged2e}
\newcommand{\SNIP}[2]{\expandafter\newcommand\csname snippet--#1\endcsname{#2}}
\IfFileExists{snips.tex}{\input{snips}}{}

\newcommand{\GetSnip}[1]{%
    \ifcsname snippet--#1\endcsname%
        \csname snippet--#1\endcsname%
    \else%
        \PackageWarning{snips}{Snippet ``#1'' is undefined.}%
        \emph{Warning: Snippet ``#1'' is undefined.}%
    \fi%
}

\newcommand{\Snippet}[1]{{%
  \newcount\i
  \i=0
  \loop
    \GetSnip{#1-\the\i}%
    \advance \i 1
  \ifcsname snippet--#1-\the\i\endcsname
  \repeat
}}

\newcommand{\SnippetPart}[3]{{%
  \newcount\i
  \i=#1
  \loop
    \ifnum \i=#2
      \renewcommand{\isanewline}{}%
    \fi
    \GetSnip{#3-\the\i}%
    \advance \i 1
    \ifnum \i>#2 {}
    \else \repeat
}}

\hypersetup{breaklinks=true}

\newcommand{\com}[1]{}

\long\def\note#1{}	

\newcommand{\eg}{e.g.,\xspace}

\newcommand{\gd}{GD\xspace}		
\newcommand{\bga}{BGA\xspace}		
\newcommand{\pga}{PGA\xspace}		
\newcommand{\rga}{RGA\xspace}		
\newcommand{\pra}{PRA\xspace}		
\newcommand{\re}{r.e.\xspace}		

\newcommand{\gdl}{G\"odel\xspace}

\newcommand{\limp}{\rightarrow}		
\newcommand{\liff}{\leftrightarrow}	
\newcommand{\ldef}{\equiv}		

\newcommand{\typestyle}[1]{\textsf{#1}\xspace}
\com{	
\newcommand{\tbool}{\typestyle{bool}}

\newcommand{\tnat}{\typestyle{nat}}

}
\newcommand{\tbool}{\typestyle{B}}	

\newcommand{\tnat}{\typestyle{N}}

\newcommand{\judgment}[1]{\textsf{ #1}}

\newcommand{\jbool}{\judgment{\tbool}}

\newcommand{\jnat}{\judgment{\tnat}}

\makeatletter
\newlength{\doublefracgap}
\DeclareRobustCommand{\doublefrac}[2]{%
  \mathinner{\mathpalette\doublefrac@{{#1}{#2}}}%
}
\newcommand{\doublefrac@}[2]{\doublefrac@@#1#2}
\newcommand{\doublefrac@@}[3]{%
  \ooalign{%
    \raisebox{\doublefracgap}{$\m@th#1\frac{#2}{\phantom{#3}}$}\cr
    \raisebox{-\doublefracgap}{$\m@th#1\frac{\phantom{#2}}{#3}$}\cr
  }%
}
\newcommand{\ddoublefrac}[2]{{\displaystyle\doublefrac{#1}{#2}}}

\makeatother

\newcommand{\irl}[1]{\ensuremath{\mathit{#1}}}		

\newcommand{\infrule}[3][]{\cfrac{#2}{#3}\IfStrEq{#1}{}{}{\ \irl{#1}}}
\newcommand{\infeqv}[3][]{\ddoublefrac{#2}{#3}\IfStrEq{#1}{}{}{\ \irl{#1}}}
\newcommand{\infceqv}[4][]{\cfrac{#2\qquad}{}\ddoublefrac{#3}{#4}
				\IfStrEq{#1}{}{}{\ \irl{#1}}}

\newcommand{\z}{\mathbf 0}

\newcommand{\suc}{\mathbf S}

\newcommand{\cR}{\mathcal R}

\newcommand{\quo}[1]{\ulcorner{#1}\urcorner}

\newcommand{\tofn}[1]{\underline{#1}}



%% file: abs.tex
\begin{abstract}
Informal statements of \gdl's incompleteness theorems often run:
``no consistent formal system with arithmetic can be complete''
--- omitting the fact that the theorems as proved assume classical logic.
This paper presents \emph{reflective grounded arithmetic} (\rga), a
paracomplete arithmetic in which truth is \emph{grounded} in
computation rather than assumed by classical fiat, and in which
universal quantification is grounded \emph{reflectively}: a
universal statement is true when the system's own proof search
certifies its schematic instance, and false when it refutes a
particular numeral instance.  \rga permits unconstrained recursive
definitions, proves the totality of addition and multiplication as
internally quantified theorems, and represents exactly the
recursively enumerable sets --- the ingredient list of the folklore
\gdl statement --- while remaining consistent.  This work proves,
with all results machine-checked in Isabelle/HOL: soundness and
consistency;
\emph{open completeness} --- provability coincides with grounded
truth on well-formed statements; \emph{N-soundness} --- every
provable totality claim is backed by an actual value; a
Church--Turing characterization of \rga's expressive power; and
$\omega$-incompleteness --- grounded truth is recursively
enumerable, and therefore some family of statements has every
numeric instance provable while its universal closure is not merely
unprovable but semantically ungrounded.  The resulting logic
occupies a Markov-flavored, substructural corner distinct from both
classical and intuitionistic arithmetic: double-negation
elimination holds, quantified excluded middle fails, refuted
universals yield explicit counterexample witnesses, and the
deduction theorem's abstraction direction fails precisely at
ungrounded hypotheses.
\end{abstract}

%% file: intro.tex
\section{Introduction}
\label{sec:intro}

\gdl's first incompleteness theorem is often informally described
as proving that
``any consistent formal system containing arithmetic
must contain true but unprovable statements.''
These informal descriptions capture key ingredients of the
classical theorems --
\eg enough arithmetic to express self-reference --
while typically omitting one important assumption:
that the target logic is \emph{classical}
(or intuitionistic, via the negative translation).

This paper presents
\emph{reflective grounded arithmetic} or \rga,
a formal system that satisfies
the above folklore ingredient list --
powerful arithmetic including
provably-total addition and multiplication and more,
quantification over natural numbers,
and full recursively-enumerable self-reference.
Despite its power,
\rga is not only consistent
but also semantically sound and complete:
every provable statement is true and vice versa.
\rga does not contradict \gdl's theorems as actually proved,
because \rga is neither classical nor intuitionistic,
but is paracomplete in the Kripke tradition~\cite{kripke75outline}.

\rga builds on \emph{grounded deduction}
(\gd)~\cite{ford24reasoning},
which pursues the goal of
bringing the freedom of unconstrained recursive definition --
routine in everyday programming --
into consistent formal reasoning.
Church and Curry both attempted such systems,
which proved inconsistent~\cite{church32set,kleene35inconsistency}.
In \gd,
a statement is true not by classical fiat
but by computational \emph{grounding}:
\eg two terms are equal only when both compute the same value.
A \emph{dynamic type check} or claim that a term denotes a natural number --
written `$t \jnat$' --
is true when the computation of $t$ actually terminates with a numeral.
\gd calls this discipline \emph{habeas quid}:
we must ``have a thing'' in order to use it in subsequent reasoning,
and an existence claim must produce the thing itself.
Nonterminating computations harmlessly denote no value ($\bot$),
and statements about them are typically neither true nor false.
Prior work showed that quantifier-free 
\emph{propositional grounded arithmetic} or \pga
is already powerful, sound, and complete~\cite{ford25have},
but general-purpose mathematical reasoning demands quantifiers.

Extending grounded arithmetic beyond propositional logic
poses the question:
what \emph{grounds} a universally quantified statement?
The na\"ive answer -- the truth of all the statement's numeral instances --
is the $\omega$-rule,
an infinitary condition no effective proof system can decide.
\rga's answer is \emph{reflection}:
a universal statement $\forall x.\,\phi(x)$ is grounded true
when the system's \emph{own proof search},
reified as a computation within \rga's term language,
certifies the schematic instance $\phi(x)$ for a fresh variable $x$.
The universal $\forall x.\,\phi(x)$ is grounded false
when that search refutes
some particular numeral instance $\phi(\bar n)$:
a refuted universal always carries
an explicit counterexample witness.
Quantifier truth is thereby grounded in computation
exactly as equational truth is:
the quantifier is a combinator whose evaluation
runs a proof search,
and the proof system and operational semantics
are defined by simultaneous reflection
rather than in separate layers.
The resulting system keeps \gd's freedoms --
unconstrained recursion via a general recursion combinator,
call-by-name application,
$\lambda$ as the sole binder --
and adds quantified reasoning,
including internal mathematical induction.

This reflective design must then answer for itself
on several fronts,
and the answers form the paper's main results,
all of them machine-checked using Isabelle/HOL~\cite{nipkow02isabelle}:

\begin{itemize}

\item \textbf{Soundness and consistency.}
\rga is consistent: no statement is both provable and refutable.
\rga is moreover sound for its operational semantics:
whatever \rga proves is grounded true.
The \emph{habeas quid} discipline itself is expressible as a theorem:
every provable totality claim `$t \jnat$'
is backed by an actual terminating computation
(\emph{\tnat-soundness}).

\item \textbf{Open completeness.}
On well-formed statements,
provability in \rga coincides with grounded truth:
a statement is provable exactly when it evaluates to true
in its operational semantics.
Grounded semantics thus gives \rga
what no effective classical system can have:
a complete proof system for its own notion of truth.

\item \textbf{Expressive power.}
\rga's provable unary predicates are
exactly the recursively enumerable (\re) sets.
Every \re\ set is represented by a closed term --
via a verified compiler from primitive-recursive indices
into \rga's recursion combinator,
and conversely, \rga's own provability is \re{}
As internal showcases,
\rga proves the totality of addition and multiplication
as quantified theorems,
by internal induction --
with multiplication's induction step
\emph{consuming} the internal addition theorem
through the reflective quantifier.
Notably, \rga reaches all primitive-recursive functions
directly through its recursion combinator,
with no \gdl-$\beta$ coding detour.
\rga reaches beyond primitive recursion, in fact,
as demonstrated in an \rga proof that Ackermann's function is total.

\item \textbf{Limit: $\omega$-incompleteness.}
Because grounded truth is \re\
while instance-wise $\Pi_1$ truth is not,
there is a family of statements in which
every numeric instance is provable,
while its universal closure is unprovable and ungrounded.
The reflective quantifier does not smuggle in an $\omega$-rule;
the gap between instance-wise and universal provability
sits exactly at the boundary of recursive enumerability.
The classic halting diagonal in \rga's term language
forms such a family.

\item \textbf{The logic's character.}
\rga occupies a corner of the logical landscape
distinct from both the classical and intuitionistic traditions.
Double-negation elimination holds,
quantified excluded middle fails,
and Markov's principle holds in a strong,
witness-extracting form:
grounded falsity of a universal is counterexample-certified
by construction.
The deduction theorem's abstraction direction fails,
precisely at ungrounded hypotheses:
in failure cases,
hypothesis discharge would manufacture
a groundedness commitment
that the hypothetical judgment never made.

\end{itemize}

Every result in this paper is machine-checked:
the complete development --
syntax, proof system, operational semantics,
soundness, completeness, the representation theorems,
and the $\omega$-incompleteness construction --
is formalized in Isabelle/HOL with no unproven obligations.
The \rga-specific development comprises
fifteen theories,
roughly 16{,}000 lines,
and some 800 named theorems and lemmas,
atop the shared grounded-arithmetic infrastructure
(\cref{sec:formal}).
The full mechanically-checked development will be released
on formal publication of this paper.

\textbf{AI disclosure:}
both the mechanically-verified proofs and the writing of this paper
were significantly assisted by artificial intelligence
(Claude Fable from Anthropic),
as detailed in \cref{sec:formal:ai}.

This paper is a companion to
the quantifier-free \gd development~\cite{ford25have}
and inherits that work's foundations:
the grounded reading of equality and dynamic type checks,
formulas as boolean-valued terms,
the propositional core of the rule set,
and the G\"odel-coding and
primitive-recursion infrastructure
underlying the computability results.
Everything quantified is new here:
the quantifier-as-combinator syntax,
the reflective semantics that grounds
universal truth in proof search,
the quantifier proof rules and
internal induction,
and the quantified metatheory --
including the entanglement of
soundness, completeness, and determinism
that reflection forces
(\cref{sec:metatheory}),
and the $\omega$-incompleteness analysis
that quantifiers first make possible
(\cref{sec:omega}).

The rest of this paper is organized as follows.
\Cref{sec:system} presents \rga:
syntax, proof system, and reflective operational semantics.
\Cref{sec:metatheory} develops the core metatheory --
soundness, consistency, determinism,
open completeness, and N-soundness.
\Cref{sec:power} characterizes \rga's expressive power
and presents the internal arithmetic.
\Cref{sec:omega} proves $\omega$-incompleteness
and its semantic sharpening.
\Cref{sec:character} locates \rga on the logical map.
\Cref{sec:related} discusses related work,
and \cref{sec:formal} describes the Isabelle/HOL formalization.

%% file: bg.tex
\section{Background}
\label{sec:bg}

This section summarizes what a reader needs
in order to follow the technical development:
truth-value gaps in Kripke's tradition (\cref{sec:bg:kripke}),
the grounded-deduction discipline built on that tradition
(\cref{sec:bg:gd}),
and the quantifier-free grounded systems
that \rga extends (\cref{sec:bg:pga}).
Readers familiar with this material may skip ahead to \cref{sec:system};
\cref{sec:related} surveys the broader landscape.

\subsection{Truth-Value Gaps and Kripke's Theory of Truth}
\label{sec:bg:kripke}

Classical logic expects every well-formed statement
to be either true or false.
A \emph{paracomplete} logic drops this expectation:
some statements are neither.
The gap is not agnosticism about
which truth value a statement ``really'' has.
A statement like the Liar (``this statement is false'')
provably cannot bear either value on pain of contradiction,
and a paracomplete logic accepts that absence
rather than legislating the absence away.

Kripke~\cite{kripke75outline} made truth-value gaps
mathematically tractable.
Starting from the atomic facts,
compound statements are evaluated by
Kleene's strong three-valued rules~\cite{kleene52introduction}:
a disjunction is true when at least one disjunct is true --
even if the other disjunct lacks a value --
false when both disjuncts are false,
and otherwise valueless;
a negation swaps true and false
and preserves valuelessness.
These rules are \emph{monotone}:
once a statement acquires a value at some evaluation stage,
later stages never revoke or change that value.
Iterating the rules therefore converges to a fixed point.
A statement is \emph{grounded} when
some stage assigns the statement a value,
and \emph{ungrounded} when no stage ever does.
The Liar is ungrounded --
as is the merely circular Truthteller
(``this statement is true'') --
and both are harmless:
paradoxical reasoning never obtains
a truth value to work with.
While Kripke's construction concerned a truth predicate
added atop classical arithmetic,
grounded deduction instead transplants
this monotone, fixed-point view of truth
into the logic itself.

\subsection{Grounded Deduction}
\label{sec:bg:gd}

Grounded deduction (\gd)~\cite{ford24reasoning}
applies the paracomplete view of truth
to a longstanding goal:
admitting \emph{unconstrained recursive definition} --
routine in everyday programming --
directly into formal reasoning.
In \gd, any function may be defined
by recursion in terms of itself and others,
with no termination proof, totality check,
or stratification required at definition time.
The price of this freedom is that
a term denotes only what the term's computation actually yields.
An equation `$s = t$' is grounded true
when both sides compute to a common value,
grounded false when the two sides compute to distinct values,
and valueless when either side diverges.
A nonterminating computation harmlessly denotes
no value at all ($\bot$).

Because a term may denote nothing,
\gd reasoning is disciplined by \emph{dynamic type checks}:
the judgment `$t \jnat$' claims that
the computation of $t$ terminates in a natural-number value,
and some inference rules require such checks as preconditions
before a term's value may be used in further reasoning.
\gd calls this discipline \emph{habeas quid}:
to reason with a thing, we must first ``have a thing'' to reason with.
Proving a recursive function to be total (always-terminating)
amounts to symbolically executing the function's computation
through these rules.
Once totality is established,
subsequent reasoning about the function
reduces to familiar classical-style steps.

\subsection{Quantifier-free Grounded Arithmetic}
\label{sec:bg:pga}

Prior work realized \gd as
mechanically-checked systems of
quantifier-free \emph{grounded arithmetic}~\cite{ford25have}.
\emph{Basic grounded arithmetic} (\bga)
contains no logical operators at all,
only terms, equations, and dynamic type checks --
yet \bga already expresses arbitrary general-recursive functions
and proves each correct result
on any fixed arguments on which the function terminates.
\emph{Propositional grounded arithmetic} (\pga)
adds grounded logical operators
implemented as \gdl-style \emph{reflective} computations
over the \bga layer:
a propositional statement is itself a computation
that searches for the statement's own grounds.
\pga supports inductive reasoning about open formulas,
with logical expressiveness
comparable to Skolem's \pra~\cite{skolem23begrundung},
but with Turing-complete functional expressiveness.
Both BGA and PGA are consistent, sound,
and semantically complete:
a statement is provable exactly when
the statement's evaluation yields true.
This is a combination that \gdl's theorems forbid
in effective classical systems,
but is available here because grounded truth,
like provability, is only recursively enumerable.

What these quantifier-free systems lack
is general-purpose mathematical reasoning:
universally quantified theorems,
and induction concluding them.
\rga closes this gap by extending
\pga's reflective implementation of the propositional operators
to the universal quantifier itself,
as \cref{sec:system} presents next.
The grounded existential quantifier then ``comes for free''
via the standard quantifier duality,
which \rga preserves.

%% file: system.tex
\section{The \rga Formal System}
\label{sec:system}

\rga consists of three components,
presented in dependency order:
a term syntax (\cref{sec:system:syntax}),
a proof system over that syntax (\cref{sec:system:proof}),
and an operational semantics defining grounded truth
(\cref{sec:system:sem}).
The order matters,
because the components are not independent layers.
The operational semantics of the universal quantifier
invokes the proof system --
evaluating a universal statement runs a proof search --
so the proof system must be defined first,
by purely syntactic rules with no semantic side conditions.
The semantics then references proofs only positively:
a certificate proof makes a statement true or false,
and the absence of certificates makes no statement of anything.
This recursive definition is thus a well-defined induction
rather than a vicious circle.
This section presents each component in turn;
the reflective knot is tied in \cref{sec:system:sem:all}.

\subsection{Syntax}
\label{sec:system:syntax}

\begin{figure}
\centering
$
t \ldef v
  \mid \bot
  \mid \z
  \mid \suc t
  \mid t = t
  \mid \neg t
  \mid t \lor t
  \mid \lambda t
  \mid t \cdot t
  \mid \cR
  \mid \cA
$
\medskip

\begin{tabular}{r@{~$\ldef$~}l@{\qquad}r@{~$\ldef$~}l}
$t \ne u$ & $\neg(t = u)$ &
$t \jnat$ & $t = t$ \\
$t \jbool$ & $t \lor \neg t$ &
$t \land u$ & $\neg(\neg t \lor \neg u)$ \\
$t \limp u$ & $\neg t \lor u$ &
$t \liff u$ & $(t \limp u) \land (u \limp t)$ \\
$\forall.\,b$ & $\cA \cdot (\lambda b)$ &
$\exists.\,b$ & $\neg(\cA \cdot (\lambda \neg b))$ \\
\end{tabular}
\caption{Term syntax of \rga (top) and derived forms (bottom).
  Variables $v$ are de Bruijn indices and $\lambda$ is the only
  binder; the quantifiers bind nothing themselves, so the derived
  $\forall.\,b$ quantifies the de Bruijn variable $0$ of the
  body $b$.}
\label{fig:syntax}
\end{figure}

\Cref{fig:syntax} shows \rga's complete syntax,
with ten primitive operators.
Terms comprise
variables,
the divergent constant $\bot$,
the arithmetic constructors zero and successor,
the grounded connectives
(equality, negation, disjunction),
$\lambda$-abstraction with application,
and two nullary combinator constants:
the primitive recursor $\cR$
and the universal quantifier $\cA$.
Variables are de Bruijn indices
and $\lambda$ is the sole binding operator.
Application is call-by-name
and $\lambda$-terms are first-class,
so unconstrained recursion needs no primitive.
The fixpoint combinator
$Y \ldef \lambda ((\lambda\,(v_1 \cdot (\vz \cdot \vz)))
  \cdot (\lambda\,(v_1 \cdot (\vz \cdot \vz))))$
is an ordinary term,
and even the primitive $\bot$ is kept
only as a convenient canonical divergent term.

Three syntactic economies deserve note.
First, there is no separate formula syntax:
formulas are boolean-valued terms,
where the boolean values are the numerals
$\tofn{1}$ (true) and $\tofn{0}$ (false) --
a design inherited from \pga.
Second, the dynamic type check
and the boolean type check
are themselves derived formulas.
The type check `$t \jnat$' abbreviates
the grounded self-equality `$t = t$',
which holds exactly when $t$ computes a natural number.
The type check `$t \jbool$' abbreviates the grounded excluded middle
`$t \lor \neg t$',
which holds exactly when $t$ computes a boolean.
Third, the universal quantifier is not a binder
but a combinator.
A universally-quantified formula is
the constant $\cA$ applied to an ordinary function term,
usually a $\lambda$-abstraction,
and a quantifier instance is
the ordinary application of that function to a numeral.
All binding structure thereby lives in
lambda abstraction $\lambda$,
and the quantifier rules below
manipulate only applications.

A term is \emph{well-formed}
(written $\wf(t)$ in the rules)
when the term is built from the ten operators
at their correct arities and binder shapes.
Well-formedness is the only syntactic side condition
appearing anywhere in \rga's rules.
There is no formula/term stratification or static typing
in the Russell tradition~\cite{whitehead11principia},
no positivity or predicativity restriction,
and no quantifier-complexity class distinguished
by any rule.

Free and bound variables obey a
\emph{variable-range invariant}
that pervades \rga's design
but diverges from logic tradition.
Bound (quantified) variables range over
the natural numbers only,
enforced by explicit `$\vz \jnat$' hypotheses
in the rules and semantics below,
while free variables range over
the full flat Scott domain $\natbot$ --
the naturals lifted to include divergence ($\bot$).
Free variables thereby stand for
arbitrary (possibly diverging) computations,
which is what makes substitution of
arbitrary well-formed terms for free variables sound
without any \emph{habeas quid} obligation on the substituend.

\subsection{Judgments and Proofs}
\label{sec:system:proof}

The proof system derives \emph{judgments}
of the form $\Gamma \vdash c$,
where the conclusion $c$ is a term (a formula)
and the hypothesis context $\Gamma$ is
a finite set of terms.
Because hypotheses are sets, not lists,
the structural operations of contraction and exchange are invisible.
Because the semantics must quantify over proofs,
\rga fixes a concrete, codable proof format.
A \emph{proof} is a finite list of judgments
in which every entry follows from earlier entries
by a single rule of \cref{fig:rules,fig:rules-reflect}.
$P \proves (\Gamma \vdash c)$ means that
the valid proof $P$ contains the judgment $\Gamma \vdash c$,
and $\Gamma \vdash c$ is \emph{derivable}
when some valid proof contains the judgment.
Proofs, like all the syntactic objects of this paper,
are G\"odel-codable,
and the semantics of \cref{sec:system:sem:all}
quantifies over exactly these codes.

\begin{figure}
\centering
\textit{structural rules}\\[0.5ex]
$\infrule[hyp]{\wf(c)}{\Gamma, c \vdash c}$
\qquad
$\infrule[wk]{\Gamma \vdash c}{\Gamma, a \vdash c}$
\qquad
$\infrule[cut]{\Gamma \vdash a \quad \Gamma, a \vdash c}
  {\Gamma \vdash c}$
\qquad
$\infrule[sub]{\Gamma \vdash c \quad \wf(\Gamma,c,s)}
  {[v{\mapsto}s]\Gamma \vdash [v{\mapsto}s]c}$
\medskip

\textit{truth values are numerals}\\[0.5ex]
$\infeqv[T{=}]{\Gamma \vdash a}{\Gamma \vdash a = \tofn{1}}$
\qquad
$\infeqv[F{=}]{\Gamma \vdash \neg a}{\Gamma \vdash a = \tofn{0}}$
\medskip

\textit{negation}\\[0.5ex]
$\infrule[{\neg}E]{\Gamma \vdash \neg a \quad \Gamma \vdash a
    \quad \wf(c)}
  {\Gamma \vdash c}$
\qquad
$\infeqv[{\neg}{\neg}]{\Gamma \vdash a}{\Gamma \vdash \neg\neg a}$
\medskip

\textit{disjunction}\\[0.5ex]
$\infrule[{\lor}I_1]{\Gamma \vdash a \quad \wf(b)}
  {\Gamma \vdash a \lor b}$
\qquad
$\infrule[{\lor}I_2]{\Gamma \vdash b \quad \wf(a)}
  {\Gamma \vdash a \lor b}$
\qquad
$\infrule[{\lor}I_3]{\Gamma \vdash \neg a \quad \Gamma \vdash \neg b}
  {\Gamma \vdash \neg(a \lor b)}$
\\[1ex]
$\infrule[{\lor}E_1]{\Gamma \vdash a \lor b \quad
    \Gamma, a \vdash c \quad \Gamma, b \vdash c}
  {\Gamma \vdash c}$
\qquad
$\infrule[{\lor}E_2]{\Gamma \vdash \neg(a \lor b)}
  {\Gamma \vdash \neg a}$
\qquad
$\infrule[{\lor}E_3]{\Gamma \vdash \neg(a \lor b)}
  {\Gamma \vdash \neg b}$
\medskip

\textit{grounded equality}\\[0.5ex]
$\infeqv[{=}refl]{\Gamma \vdash a \jnat}{\Gamma \vdash a = a}$
\qquad
$\infrule[{=}sym]{\Gamma \vdash a = b}{\Gamma \vdash b = a}$
\qquad
$\infrule[{=}subs]{\Gamma \vdash a = b \quad
    \Gamma \vdash [v{\mapsto}a]p \quad \wf(p)}
  {\Gamma \vdash [v{\mapsto}b]p}$
\\[1ex]
$\infrule[{\ne}sym]{\Gamma \vdash a \ne b}{\Gamma \vdash b \ne a}$
\qquad
$\infrule[{=}TI]{\Gamma \vdash a \jnat \quad \Gamma \vdash b \jnat}
  {\Gamma \vdash (a = b) \jbool}$
\qquad
$\infrule[{=}TE]{\Gamma \vdash (a = b) \jbool}
  {\Gamma \vdash (a \jnat) \land (b \jnat)}$
\medskip

\textit{natural numbers and induction}\\[0.5ex]
$\infrule[\z{}I]{}{\Gamma \vdash \z \jnat}$
\qquad
$\infrule[\suc{}TI]{\Gamma \vdash a \jnat}
  {\Gamma \vdash \suc a \jnat}$
\qquad
$\infrule[\suc{}TE]{\Gamma \vdash \suc a \jnat}
  {\Gamma \vdash a \jnat}$
\qquad
$\infrule[\suc{}{\ne}\z]{\Gamma \vdash a \jnat}
  {\Gamma \vdash \suc a \ne \z}$
\\[1ex]
$\infrule[\suc{}{\ne}I]{\Gamma \vdash a \ne b}
  {\Gamma \vdash \suc a \ne \suc b}$
\qquad
$\infrule[\suc{}{=}E]{\Gamma \vdash \suc a = \suc b}
  {\Gamma \vdash a = b}$
\qquad
$\infrule[\suc{}{\ne}E]{\Gamma \vdash \suc a \ne \suc b}
  {\Gamma \vdash a \ne b}$
\qquad
$\infrule[\bot{}TE]{\Gamma \vdash \bot \jnat \quad \wf(p)}
  {\Gamma \vdash p}$
\\[1ex]
$\infrule[ind]{\Gamma \vdash [i{\mapsto}\z]c \quad
    v_i \jnat,\, c,\, \Gamma \vdash [i{\mapsto}\suc v_i]c \quad
    \Gamma \vdash a \jnat \quad
    \wf(c) \quad i \notin \fv(\Gamma)}
  {\Gamma \vdash [i{\mapsto}a]c}$
\caption{The \rga proof rules, part one:
  the structural and logical trunk shared with \pga.
  A double line means the rule holds in both directions
  (each direction is one primitive rule);
  $\wf(\cdot)$ marks well-formedness side conditions,
  which attach only to terms not already
  occurring in a premise judgment.}
\label{fig:rules}
\end{figure}

\begin{figure}
\centering
\textit{syntactic head steps}\\[0.5ex]
$\infrule[\beta]{\wf(t)}{(\lambda b) \cdot t \hs b[t]}$
\qquad
$\infrule[\cR\z]{\wf(h)}{\cR \cdot g \cdot h \cdot \z \hs g}$
\qquad
$\infrule[\cR\suc]{\wf(g)}
  {\cR \cdot g \cdot h \cdot (\suc \tofn{n}) \hs
   h \cdot \tofn{n} \cdot (\cR \cdot g \cdot h \cdot \tofn{n})}$
\qquad
$\infrule[spine]{t \hs t'}{t \cdot u \hs t' \cdot u}$
\medskip

\textit{conversion along head steps}\\[0.5ex]
$\infrule[conv]{\Gamma \vdash c' \quad c \hs c'}
  {\Gamma \vdash c}$
\qquad
$\infrule[convE]{\Gamma \vdash c \quad c \hs c'}
  {\Gamma \vdash c'}$
\qquad
$\infrule[conv{=}]{\Gamma \vdash c' \jnat \quad c \hs c'}
  {\Gamma \vdash c = c'}$
\medskip

\textit{schematic recursor unfolding}\\[0.5ex]
$\infrule[\cR\suc{}{=}]{\Gamma \vdash m \jnat \quad
    \Gamma \vdash (h \cdot m \cdot (\cR \cdot g \cdot h \cdot m)) \jnat
    \quad \wf(g)}
  {\Gamma \vdash \cR \cdot g \cdot h \cdot (\suc m) =
    h \cdot m \cdot (\cR \cdot g \cdot h \cdot m)}$
\medskip

\textit{the reflective quantifier}\\[0.5ex]
$\infrule[{\cA}I]{\vz \jnat,\, \lft\Gamma \vdash \lft f \cdot \vz}
  {\Gamma \vdash \cA \cdot f}$
\qquad
$\infrule[{\cA}E]{\Gamma \vdash \cA \cdot f \quad \Gamma \vdash a \jnat}
  {\Gamma \vdash f \cdot a}$
\\[1ex]
$\infrule[{\neg\cA}I]{\Gamma \vdash \neg(f \cdot a) \quad
    \Gamma \vdash a \jnat}
  {\Gamma \vdash \neg(\cA \cdot f)}$
\qquad
$\infrule[{\neg\cA}E]{\Gamma \vdash \neg(\cA \cdot f) \quad
    \vz \jnat,\, \neg(\lft f \cdot \vz),\, \lft\Gamma \vdash \lft c}
  {\Gamma \vdash c}$
\caption{The \rga proof rules, part two:
  computation and the universal quantifier.
  The head-step relation $t \hs t'$ is a
  primitive-recursively decidable syntactic relation,
  not a judgment;
  $b[t]$ is the $\beta$-instance substituting $t$ for
  the outermost bound variable of $b$,
  $\tofn{n}$ is a syntactic numeral,
  and $\lft{(\cdot)}$ shifts all free de Bruijn indices up by one,
  so that $\vz$ is fresh for $\lft\Gamma$, $\lft f$, and $\lft c$.}
\label{fig:rules-reflect}
\end{figure}

\Cref{fig:rules,fig:rules-reflect} shows
\rga's complete set of 41 primitive rules.
The count is small because the derived operators
pull their weight.
All rules for
$\land$, $\limp$, $\liff$, $\ne$, $\jnat$, $\jbool$,
transitivity of equality,
and the familiar classical-style connective algebra
are derived by unfolding the abbreviations
into the primitive connectives.
The rules for the derived operators are
mechanically checked as lemmas over the 41 primitives,
at no fundamental cost to the metatheory.

A few conventions apply throughout.
Side conditions $\wf(t)$ attach
exactly where a rule introduces a term
not already present in some premise judgment.
Derivable judgments therefore always have
well-formed conclusions,
while hypotheses are unconstrained.
An ill-formed or ungrounded hypothesis is harmless:
a hypothesis can only be used
via rules whose own side conditions
gate what the hypothesis can conclude.
Substitution $[v{\mapsto}s]$ replaces
a free variable by an arbitrary well-formed term $s$ --
including in open terms, $\lambda$-abstractions,
and diverging terms --
with no groundedness premise,
as justified by the variable-range invariant above.

Several rule groups merit a closer look.

\paragraph{Grounded equality and \emph{habeas quid}.}
The double rule \irl{{=}refl} materializes
the \emph{habeas quid} discipline.
Reflexivity $a = a$ is not free as in classical arithmetic, but
holds exactly when $a$ denotes a natural number --
indeed `$a \jnat$' is just notation for `$a = a$'.
Grounded equality is thereby
an equality of values, not descriptions.
Diverging terms are not equal to anything,
and the truth of an equality entails
that both sides terminate (\irl{{=}TE}).
Substitutivity (\irl{{=}subs}) is nonetheless
full congruence:
the context $p$ is an arbitrary well-formed term,
including positions under $\lambda$
and in unevaluated arguments.

\paragraph{Truth values and the logic.}
The \irl{T{=}}/\irl{F{=}} pair identifies
truth of a formula with the formula's
evaluation to a boolean numeral,
making ``$a$ is provable'' equivalent to
``$a$ provably has value $\tofn{1}$''.
Negation is the boolean flip,
so double-negation elimination (\irl{{\neg}{\neg}})
is simply sound --
a first clear hint that \rga is not intuitionistic.
The disjunction rules give
the proof-level image of
Kleene's strong disjunction.
\rga needs explicit negative-case (``not-or'')
introduction and elimination rules
to compensate for the unavailability of
the full-strength classical law of excluded middle (LEM).

\paragraph{Induction without formula classes.}
The induction rule \irl{ind} places
no restriction on the motive $c$
beyond well-formedness:
the motive may contain quantifiers, recursion,
even ungrounded subformulas.
Classical arithmetics stratify induction
by formula complexity because
the arithmetical hierarchy is real for them.
In \rga's semantics the hierarchy collapses
into a single notion of grounded truth,
and one induction rule serves all of this closed universe.
The scrutinee premise `$a \jnat$' supplies
the number to induct up to:
induction, like everything else in \rga,
is grounded in an actual value.

\paragraph{Computation in the proof system.}
The head-step relation $\hs$
(\cref{fig:rules-reflect}, top)
is ordinary weak-head reduction:
call-by-name $\beta$ --
the argument substituted unevaluated --
recursor dispatch on syntactic numerals,
and reduction in function position of an application spine.
The conversion rules transport provability
along a head step in either direction,
and \irl{conv{=}} records a step as
a provable equation when the reduct has a value.
The head-step relation is deliberately syntactic and
primitive-recursively decidable.
Proof checking must remain effective,
so proofs convert along checkable steps,
and all semantic content stays in the premises.
The schematic rule \irl{\cR\suc{}{=}} complements
the syntactic steps.
The rule unfolds the recursor at $\suc m$ for an
arbitrary term $m$ proved to be a natural --
the form needed under induction,
where no syntactic numeral is in hand.
The \emph{habeas quid} profile of call-by-name shows
in the side conditions:
the $\beta$-rule demands no termination of the argument
(a function may discard its argument unevaluated),
but recursor unfolding at $\suc m$ demands `$m \jnat$'
(the recursor scrutinizes the value),
and \irl{conv{=}} demands a value for the reduct
(grounded equality needs one).

\paragraph{The reflective quantifier.}
The quartet at the bottom of \cref{fig:rules-reflect}
is the heart of \rga.
Universal introduction (\irl{{\cA}I}) is \emph{schematic}:
to prove $\cA \cdot f$,
one must first prove the single open instance $\lft f \cdot \vz$
for the fresh variable $\vz$,
which is assumed only to be some natural number.
Universal elimination (\irl{{\cA}E}) is
plain function application:
from $\cA \cdot f$ and a witness `$a \jnat$',
conclude $f \cdot a$ --
no substitution, no lifting.
Instantiating a quantifier is
just applying a function,
and the $\beta$-step that unfolds
$f \cdot a$ into a formula instance
is ordinary conversion.
The negative pair mirrors the semantics of
refuted universals.
$\neg(\cA \cdot f)$ is introduced from
an explicit counterexample (\irl{{\neg\cA}I}),
and eliminated (\irl{{\neg\cA}E}) by
reasoning under a fresh assumed counterexample:
a hypothetical witness $\vz$ with
$\neg(\lft f \cdot \vz)$.
The elimination is sound precisely because,
as the semantics will guarantee,
a false universal always carries a witness.

\subsection{Operational Semantics and Grounded Truth}
\label{sec:system:sem}

\begin{figure}
\centering
\textit{head shapes} \quad
$w \ldef \lambda b
  \mid \cR \mid \cR\,g \mid \cR\,g\,h
  \mid \cA$
\medskip

\textit{value evaluation (selected clauses)}\\[0.5ex]
$\infrule[]{}{\z \bev{s+1} 0}$
\qquad
$\infrule[]{a \bev{s} n}{\suc a \bev{s+1} n+1}$
\qquad
$\infrule[]{a \bev{s} m \quad b \bev{s} n}
  {(a = b) \bev{s+1} \tv{m = n}}$
\qquad
$\infrule[]{a \bev{s} \tvF}{\neg a \bev{s+1} \tvT}$
\\[1ex]
$\infrule[]{a \bev{s} \tvT}{(a \lor b) \bev{s+1} \tvT}$
\qquad
$\infrule[]{b \bev{s} \tvT}{(a \lor b) \bev{s+1} \tvT}$
\qquad
$\infrule[]{a \bev{s} \tvF \quad b \bev{s} \tvF}
  {(a \lor b) \bev{s+1} \tvF}$
\\[1ex]
$\infrule[]{t \hev{s} \lambda b \quad b[u] \bev{s} r}
  {(t \cdot u) \bev{s+1} r}$
\qquad
$\infrule[]{t \hev{s} \cR\,g\,h \quad m \bev{s} 0 \quad g \bev{s} r}
  {(t \cdot m) \bev{s+1} r}$
\\[1ex]
$\infrule[]{t \hev{s} \cR\,g\,h \quad m \bev{s} k+1 \quad
    (h \cdot \tofn{k} \cdot (\cR \cdot g \cdot h \cdot \tofn{k}))
      \bev{s} r}
  {(t \cdot m) \bev{s+1} r}$
\medskip

\textit{the reflective clauses}\\[0.5ex]
$\infrule[allT]{t \hev{s} \cA \quad
    \quo{P} \le s \quad
    P \proves (\{\vz \jnat\} \vdash \lft u \cdot \vz)}
  {(t \cdot u) \bev{s+1} \tvT}$
\\[1ex]
$\infrule[allF]{t \hev{s} \cA \quad
    \quo{P} \le s \quad
    P \proves (\emptyset \vdash \neg(u \cdot \tofn{i}))}
  {(t \cdot u) \bev{s+1} \tvF}$
\caption{The \rga operational semantics (selected clauses).
  $t \bev{s} r$: closed term $t$ evaluates to natural number $r$
  within step index $s$;
  $t \hev{s} w$: $t$ head-evaluates to head shape $w$.
  $\tvT = 1$ and $\tvF = 0$;
  $\quo{P}$ is the G\"odel code of the proof $P$.
  Omitted: the head-evaluation clauses that build head shapes
  (mirroring the application clauses shown), which let the
  recursor's selected branch itself reduce to a function --
  higher-order recursion.}
\label{fig:sem}
\end{figure}

Semantic evaluation in \rga is defined on closed terms.
An open formula is evaluated by first
\emph{closing} the formula under an assignment
$A : \mathrm{Var} \to \natbot$,
which substitutes the numeral $\tofn{n}$
for a variable assigned natural number $n$,
and substitutes the divergent term $\bot$
for a variable assigned $\bot$.
The closing substitution realizes
the variable-range invariant:
free variables stand for arbitrary computations,
including divergent ones.

Two mutually inductive, step-indexed judgments
define evaluation (\cref{fig:sem}).
\emph{Head evaluation} $t \hev{s} w$ reduces a term
to one of five weak-head shapes --
a $\lambda$-abstraction,
an under-applied recursor spine,
or the bare quantifier combinator $\cA$.
Functions are never values, only head shapes:
the observable value domain is exactly
the natural numbers,
unchanged from \bga.
\emph{Value evaluation} $t \bev{s} r$ evaluates
a term to a natural number.
The arithmetic and connective clauses are
those of \bga:
equality evaluates both sides to naturals and compares,
negation flips genuine booleans,
and disjunction is Kleene-strong.
The application clauses dispatch on
the head shape of the function position,
substituting call-by-name $\beta$-arguments unevaluated.
Divergence is simply the absence of a derivation:
there is no clause for $\bot$,
no clause for a free variable
(closed terms have none),
and no explicit ``error'' value.

\subsubsection{The Reflective Quantifier Clauses}
\label{sec:system:sem:all}

The two clauses at the bottom of \cref{fig:sem}
tie the reflective knot.
A universal formula $\cA \cdot u$
evaluates to true when
some valid \rga proof $P$
derives the \emph{schematic instance}
$\{\vz \jnat\} \vdash \lft u \cdot \vz$ --
the same judgment shape that
universal introduction \irl{{\cA}I} asks for.
The certificate $P$ is a proof in
exactly the system of \cref{sec:system:proof},
G\"odel-coded and bounded by the step index.
The universal evaluates to false when
some proof refutes a particular numeral instance
$\emptyset \vdash \neg(u \cdot \tofn{i})$:
falsity of a universal is always
counterexample-certified.
Both clauses mention derivability only positively,
so the simultaneous definition of
proof system and semantics is
an ordinary monotone induction.
No negative reference to provability,
and hence no Tarskian circularity,
ever arises.
Statements reachable by neither clause --
at any step index --
are simply ungrounded:
such statements have no truth value,
in the Kripkean sense of \cref{sec:bg:kripke}.

The step index plays the role of
Kripke's construction stages.
Evaluation is monotone in the index:
a value once obtained is never revoked.
The index bounds the G\"odel codes of
the certificate proofs a quantifier clause may consult,
so ever-larger indices admit ever-larger proofs,
converting towards the fixpoint of all \rga proofs.
The quantifier thereby searches,
in the limit,
the system's entire own proof enumeration:
grounded truth of a universal is
schematic provability,
and grounded falsity is
refutability of an instance.
A quantifier search that finds neither remains valueless ($\bot$).

\emph{Grounded truth} then consolidates
into a single definition:
a formula $t$ is grounded true under assignment $A$
when the closure of $t$ under $A$
evaluates to $\tvT$ at some step index,
and grounded false when the closure
evaluates to $\tvF$.
A judgment $\Gamma \vdash c$ is \emph{semantically true}
when, for every assignment,
grounded truth of all hypotheses in $\Gamma$
implies grounded truth of $c$.
\Cref{sec:metatheory} proves that
derivability and semantic truth coincide.

Note what the quantifier semantics does not do.
The clauses do not consult
the truth of all numeral instances --
that would be the $\omega$-rule,
and \cref{sec:omega} proves that
the reflective quantifier is strictly weaker.
Nor do the clauses observe the code of
the body $u$ in any way beyond applying the body:
$u$ is used as a function,
so provably-equal bodies yield
the same quantifier truth values.
The quantifier is a computation like any other in \gd:
one whose evaluation happens to run
a proof search over the very system
being defined.

%% file: metatheory.tex
\section{Core Metatheory}
\label{sec:metatheory}

This section establishes the core results about \rga:
completeness, soundness, determinism of evaluation,
consistency,
and the resulting exact coincidence of
provability with grounded truth.
In a conventional logic these would be
independent developments.
In \rga the reflective quantifier entangles them:
the semantics contains proof search,
so facts about evaluation rest on
facts about derivability, and conversely.
The metatheory consequently unwinds
in an order that may surprise a classical reader --
completeness comes first and is self-contained,
soundness consumes completeness,
and determinism of evaluation consumes soundness.
Each theorem below names the corresponding
machine-checked result in the Isabelle/HOL development
(\cref{sec:formal}).

\subsection{Completeness}
\label{sec:metatheory:complete}

\begin{thm}[value completeness; \texttt{rga\_complete\_val}]
\label{thm:complete-val}
If $a$ is well-formed and $a \bev{s} r$,
then $\emptyset \vdash a = \tofn{r}$.
\end{thm}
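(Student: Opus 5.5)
We prove the statement by induction on the step index $s$, simultaneously with a companion statement for head evaluation: if $t$ is well-formed and $t \hev{s} w$, then every value evaluation that consumes $w$ can be replayed in the proof system. Concretely, the head companion says: if $t \hev{s} \lambda b$, then for every well-formed $u$, provability of $b[u] = \tofn{r}$ transfers to $t \cdot u = \tofn{r}$. The analogous transfers hold for the recursor shapes. For $t \hev{s} \cA$, the claims $\emptyset \vdash t \cdot u$ and $\emptyset \vdash \neg(t \cdot u)$ transfer from $\cA \cdot u$ and $\neg(\cA \cdot u)$. The cleanest formulation is probably that head evaluation is witnessed by a chain of head steps $t \hs \cdots \hs w'$, with $w'$ syntactically of the shape $w$ applied to the accumulated arguments. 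The rules \irl{conv}, \irl{convE} and \irl{spine} then transport any conclusion about $w' \cdot u$ back to $t \cdot u$. Closed terms contain no free variables, so every substitution instance stays closed and well-formed, and we never need \irl{sub}.

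The arithmetic and connective cases are routine. For $\z$ we use \irl{\z{}I} and \irl{{=}refl}. For $\suc a$, the induction hypothesis gives $a = \tofn{n}$; we obtain $\suc a = \suc\tofn{n}$ by \irl{{=}subs} with context $\suc v$, starting from $\suc\tofn{n} \jnat$ via \irl{\suc{}TI}. For $a = b$ with values $m,n$, we rewrite both sides to numerals. We then need the lemma that $\tofn{m}=\tofn{n}$ is provably $\tofn{1}$ when $m=n$ and provably $\tofn{0}$ otherwise. This follows from \irl{\suc{}{\ne}\z}, \irl{\suc{}{\ne}I}, \irl{{\ne}sym} and \irl{T{=}}/\irl{F{=}}, by induction on $\min(m,n)$. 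Negation and disjunction follow the same pattern: we convert the induction hypotheses $a=\tofn{1}$ or $a = \tofn{0}$ into $a$ or $\neg a$ via \irl{T{=}}/\irl{F{=}}, apply \irl{{\neg}{\neg}}, \irl{{\lor}I_1}, \irl{{\lor}I_2} or \irl{{\lor}I_3}, and convert back.

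For the application clauses, the $\beta$ case uses the head companion together with the induction hypothesis on $b[u]$; well-formedness of $b[u]$ follows from well-formedness of $t\cdot u$. In the recursor cases we first rewrite the scrutinee $m$ to the numeral $\tofn{0}$ or $\tofn{k+1}$ using the induction hypothesis and \irl{{=}subs}. We then fire the syntactic step \irl{\cR\z} or \irl{\cR\suc} through \irl{conv} and use the induction hypothesis on the branch. Transitivity of equality, which is derived, glues the pieces together.

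The reflective clauses are where completeness is ``self-contained'' in the sense the paper promises. For \irl{allT} we are handed a proof $P$ of $\{\vz\jnat\} \vdash \lft u\cdot\vz$. Since $u$ is closed, $\lft u = u$ and $\lft\emptyset=\emptyset$, so \irl{{\cA}I} yields $\emptyset \vdash \cA\cdot u$ directly, and \irl{T{=}} gives $\cA\cdot u = \tofn{1}$. For \irl{allF}, $P$ proves $\emptyset\vdash\neg(u\cdot\tofn{i})$, and \irl{{\neg\cA}I} with the premise $\tofn{i}\jnat$ yields $\neg(\cA\cdot u)$, hence $\cA\cdot u=\tofn{0}$ by \irl{F{=}}. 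The head companion then transports these results to $t\cdot u$.

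The main obstacle I expect is the head-evaluation companion. Head evaluation of a recursor branch can itself reduce to a function (higher-order recursion), and the scrutinee there is evaluated semantically rather than being a syntactic numeral. So $t$ need not head-step syntactically to $w$. In that case I would strengthen the companion to an equational form, proved under the same step induction: for all well-formed argument lists $\vec u$, $\emptyset\vdash t\cdot\vec u = c$ follows from $\emptyset \vdash w'\cdot\vec u = c$. The recursor-scrutinee substitution would be justified by \irl{{=}subs} with a context $p$ of the form $\cR\cdot g\cdot h\cdot v\cdot\vec u = c$. This is legitimate because \irl{{=}subs} permits arbitrary well-formed contexts, which avoids any need for syntactic reduction there.
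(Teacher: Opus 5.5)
Your proposal is correct and follows essentially the paper's route: a mutual induction over evaluation with a head-evaluation companion, conversion along head steps, \irl{T{=}}/\irl{F{=}} bridging, and the \irl{allT}/\irl{allF} certificates fed directly to \irl{{\cA}I} and \irl{{\neg\cA}I}. Your equational, argument-list form of the companion, with \irl{{=}subs} rewriting semantically evaluated recursor scrutinees to numerals before \irl{conv{=}} and transitivity fire, is a sound way to fill in what the paper's sketch leaves implicit. One minor note: the paper's statement also covers open terms, and your argument never actually needs closedness, because the \irl{allT} certificate $\{\vz \jnat\} \vdash \lft u \cdot \vz$ is already literally the premise of \irl{{\cA}I} with $\Gamma = \emptyset$.
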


\begin{cor}[open completeness; \texttt{rga\_complete\_open}]
\label{cor:complete-open}
If $c$ is well-formed and $c \bev{s} \tvT$,
then $\emptyset \vdash c$.
\end{cor}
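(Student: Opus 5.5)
The corollary should follow from \cref{thm:complete-val} by specializing $r$ to $\tvT$ and converting the resulting equation back into the formula via the truth-value rule \irl{T{=}}.

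\textbf{Step 1.} Assume $c$ is well-formed and $c \bev{s} \tvT$. Since $\tvT = 1$, applying \cref{thm:complete-val} with $a := c$ and $r := 1$ gives the derivable judgment $\emptyset \vdash c = \tofn{1}$.

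\textbf{Step 2.} The rule \irl{T{=}} is a double rule: each direction is a primitive rule. Its upward reading infers $\Gamma \vdash a$ from $\Gamma \vdash a = \tofn{1}$. Instantiating it with $\Gamma := \emptyset$ and $a := c$ yields $\emptyset \vdash c$. This rule introduces no new term beyond those already in its premise, so no further well-formedness side condition is needed. Hence appending this one step to the proof obtained in Step 1 gives a valid proof containing $\emptyset \vdash c$.

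\textbf{Main obstacle.} There is essentially none at this level; all the substance lies in \cref{thm:complete-val}. The only things to check are bookkeeping:
\begin{itemize}
\item the numeral $\tofn{1}$ produced by the theorem is syntactically the same term as the one in \irl{T{=}}, i.e.\ $\suc\z$;
\item evaluation is defined on closed terms, so $c$ is closed here. The resulting judgment has empty context, which is what the statement requires, and no assignment or closing substitution is involved.
\end{itemize}

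If the formalization stated \irl{T{=}} only in the downward direction, I would instead derive $c$ from $c = \tofn{1}$ using \irl{{=}subs}. The context would be $p := v$ (a variable), with premises $\tofn{1} = c$ from \irl{{=}sym} and $\tofn{1}$, where $\tofn{1}$ itself comes from the $\suc\z$ numeral facts. The double-rule reading makes this detour unnecessary.
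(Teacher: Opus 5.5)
Your argument is correct and is the paper's route to the corollary: value completeness at $r = \tvT = 1$ gives $\emptyset \vdash c = \tofn{1}$, and the upward primitive direction of the bridge rule \irl{T{=}} then yields $\emptyset \vdash c$. One small inaccuracy that does not affect the proof: $c$ need not be closed here --- the paper stresses that the evaluation judgments apply to open terms too (hence ``open'' completeness) --- but your two steps never use closedness.
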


\begin{proof}[Proof sketch]
By mutual induction on the evaluation derivation,
with a companion invariant for head evaluation.
Each value clause maps to
the introduction rules of its connective,
and the truth-value bridge rules
(\irl{T{=}}/\irl{F{=}})
convert between bare truth and boolean values.
Head-evaluation steps are tracked by
the conversion rules:
the proof system converts along
exactly the head steps that evaluation takes,
which is why the conversion rules are primitive.
The quantifier cases are where reflection pays.
The premise of the \irl{allT} clause is
a valid proof of the schematic instance --
literally the premise that
universal introduction \irl{{\cA}I} needs --
so the completeness proof simply reads
the certificate off the evaluation derivation;
the \irl{allF} case likewise feeds
the refutation certificate and the numeral witness
to \irl{{\neg\cA}I}.
Completeness at the quantifier,
classically the point where an $\omega$-rule
would be unavoidable,
is here true by construction:
grounded quantifier truth in \rga was
\emph{defined} as provability.
\end{proof}

Two aspects of the statement deserve note.
First, the theorem needs no prior consistency,
soundness, or determinism:
completeness is a self-contained induction,
which the remaining metatheory builds on.
Second, the theorem covers open terms as stated.
\Cref{sec:system:sem} presents evaluation
on closed terms -- the intended reading --
but the judgments themselves apply to
open terms as well:
nothing in the clauses requires closedness.
A free variable is simply stuck, thus denoting $\bot$,
and the certificate lift in the \irl{allT} clause
($\lft u$ in \cref{fig:sem})
keeps this open evaluation
stable under substitution.

\subsection{Soundness}
\label{sec:metatheory:sound}

\begin{thm}[soundness; \texttt{rga\_sound}]
\label{thm:sound}
If $\Gamma \vdash c$,
then for every assignment $A$:
if every hypothesis in $\Gamma$ is
grounded true under $A$,
then $c$ is grounded true under $A$.
\end{thm}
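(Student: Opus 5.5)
The plan is to induct on the derivation of $\Gamma \vdash c$, more precisely on the position of the judgment in a valid proof $P$, and to prove for each of the 41 primitive rules that it preserves semantic truth under every assignment $A$. Because substitution (\irl{sub}) and the binder-shifting rules (\irl{{\cA}I}, \irl{{\neg\cA}E}, \irl{ind}) change the assignment, the induction hypothesis must be quantified over all assignments $A : \mathrm{Var}\to\natbot$. Before the rule cases I would establish four semantic lemmas about closed evaluation:
\begin{itemize}
\item \emph{monotonicity} in the step index, so premises evaluated at different indices can be combined at their maximum;
\item a \emph{substitution/closure lemma}: closing $[v{\mapsto}s]c$ under $A$ evaluates as closing $c$ under $A$ updated at $v$ with the value of $s$'s closure. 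This is where the $\natbot$ range of free variables matters: if $s$ diverges we assign $\bot$, and we need that substituting any divergent closed term behaves like substituting $\bot$;
\item \emph{head-step invariance}: if $c \hs c'$ then $c$ and $c'$ (closed) evaluate to the same results, for \irl{conv}, \irl{convE}, \irl{conv{=}}, and \irl{\cR\suc{}{=}};
\item a \emph{shift lemma} relating $\lft t$ under an assignment extended at $\vz$ to $t$ under the original assignment.
\end{itemize}

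With these lemmas, the propositional, equality and arithmetic rules are routine checks against the Kleene-strong value clauses, as in \pga. Examples: \irl{{\neg}E} is vacuous because a term cannot evaluate to both $\tvT$ and $\tvF$; \irl{\bot{}TE} is vacuous because $\bot$ has no evaluation; \irl{ind} is handled by a meta-level induction on the value of the closed scrutinee $a$.

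The quantifier rules are where reflection enters, and where Corollary~\ref{cor:complete-open} is consumed.
\begin{itemize}
\item \irl{{\cA}E}: suppose $\cA\cdot f$ is grounded true under $A$. By clause \irl{allT} there is a proof certifying $\{\vz\jnat\}\vdash \lft{f_A}\cdot\vz$, where $f_A$ is the closure of $f$. Here the induction hypothesis is unavailable, because that certificate is not a subderivation of the current proof. I would therefore strengthen the statement: do an outer induction on step index (or on proof code) simultaneously proving that every proof with code $\le s$ is sound. An \irl{allT} certificate at index $s$ has code $\le s$, so it is sound. Applying \irl{sub} with the numeral value of $a$ then makes $f\cdot a$ true.
\item \irl{{\neg\cA}E}: the \irl{allF} certificate $\emptyset\vdash\neg(f_A\cdot\tofn{i})$ is likewise sound by the outer hypothesis. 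This gives a concrete counterexample $i$, which we use to instantiate the hypothetical witness $\vz$ in the second premise.
\item \irl{{\cA}I}: given semantic truth of the premise and grounded-true hypotheses, we need an actual proof certificate of the closed schematic instance. The semantic premise does not supply one. I would instead obtain it syntactically, by applying \irl{sub} to the premise's derivation with the closing substitution of $A$ and discharging the hypotheses $\lft\Gamma$ by \irl{cut}. Each hypothesis is grounded true, so its closure evaluates to $\tvT$, and Corollary~\ref{cor:complete-open} makes it provable outright. The resulting proof has some code $N$, so \irl{allT} fires at index $\max(N,\cdot)+1$.
\item \irl{{\neg\cA}I}: same method, turning a semantically refuted instance into a proof via completeness applied to $\neg(f\cdot a)$ after closing.
\end{itemize}

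I expect the main obstacle to be making the \irl{{\cA}E}/\irl{{\neg\cA}E} cases well founded. The certificates are arbitrary proofs outside the current derivation, so the naive induction fails. My first attempt would be the combined induction on step index sketched above, relying on $\quo{P}\le s$ in both reflective clauses. A second difficulty is the closing substitution in \irl{{\cA}I}: the certificate must be for exactly $\lft{u}\cdot\vz$ with $u$ the closed body. This needs the substitution commuting lemma $[\,\cdot\,]\lft{f} = \lft{([\,\cdot\,]f)}$ on de Bruijn terms, and, for variables assigned $\bot$, closure by the literal term $\bot$ rather than a semantic value.
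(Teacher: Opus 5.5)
\paragraph{Gap 1: the induction motive.} Quantifying over assignments $A:\mathrm{Var}\to\natbot$ does not give an inductive invariant, and the closure lemma you plan to use for \irl{sub} is false in \rga. Functions are never values, only head shapes. So a closed $\lambda$-term (or $\cA$, or a partial recursor spine) has no numeral value, yet it does not behave like $\bot$: in function position it fires. An assignment into $\natbot$ has nothing to map such a substituend to.

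Concretely, under every assignment the hypothesis $\vz\cdot(\z=\z)$ closes to a stuck term, either $\tofn{n}\cdot(\z=\z)$ or $\bot\cdot(\z=\z)$. Hence $\vz\cdot(\z=\z)\vdash\neg(\z=\z)$ satisfies your motive vacuously. But \irl{sub} with $s=\lambda\vz$ yields $(\lambda\vz)\cdot(\z=\z)\vdash\neg(\z=\z)$, whose hypothesis evaluates to $\tvT$ while its conclusion is grounded false. So the \irl{sub} case cannot be discharged from an assignment-based induction hypothesis; the paper has this as the machine-checked counterexample \texttt{rjt\_sub1\_motive\_fails}.

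The missing idea is to strengthen the motive to \emph{substitutional} judgment truth: quantify over closing substitutions into arbitrary closed well-formed terms, $\lambda$-terms included. A closing substitution composed with $[v{\mapsto}s]$ is again one, so \irl{sub} becomes compositional, and the theorem is the special case of numeral/$\bot$ substitutions. Your \irl{{\cA}I} case survives unchanged and is exactly the paper's: close the premise derivation by \irl{sub}, derive each closed grounded-true hypothesis by \cref{cor:complete-open}, and discharge by \irl{cut}.

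\paragraph{Gap 2: well-founding the reflective cases.} Your outer induction does not descend. While you prove soundness of a proof of code $\le s$, the \irl{{\cA}E} case gets an index $s'$ at which $\cA\cdot f_A$ evaluates to $\tvT$. That index comes from the inner induction hypothesis and is unrelated to $s$. The certificate has code $\le s'$, not $\le s$, so your outer hypothesis need not cover it; the same holds for the \irl{allF} certificate in \irl{{\neg\cA}E}.

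A related circularity hides in the cases you call routine. \irl{{\neg}E} is vacuous only if no closed term evaluates to both $\tvT$ and $\tvF$, and the congruence argument for \irl{{=}subs} likewise needs uniqueness of values. In \rga determinism is not syntactic: ruling out a universal that carries both an \irl{allT} and an \irl{allF} certificate requires soundness itself, which is why \cref{thm:det} consumes \cref{thm:sound}.

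The paper breaks both cycles with the witnessed evaluation pair $\wbev{s}$, $\whev{s}$:
\begin{itemize}
\item Its \irl{allT} clause additionally carries $\forall n.\,\exists s_n.\;(u\cdot\tofn{n})\wbev{s_n}\tvT$, and its \irl{allF} clause carries the actual falsity of the refuted instance.
\item Determinism and soundness are then plain inductions against this semantics. \irl{{\cA}E} and \irl{{\neg\cA}E} read the instance facts off the derivation in hand.
\item \irl{{\cA}I} gets its witness premises from the induction hypothesis at each numeral, and its certificate from completeness plus \irl{cut}.
\item Afterwards, witnessed and official evaluation are shown to coincide on closed terms, because the just-proved soundness says certificates deliver exactly what the witness premises assert.
\end{itemize}
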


The proof is a rule induction over the derivation,
with two twists forced by \rga's design.

The first twist concerns the induction motive.
The obvious motive --
\cref{thm:sound}'s statement itself,
truth under all satisfying assignments --
is provably not inductive:
the substitution rule \irl{sub} breaks the motive,
and the development contains
a machine-checked counterexample
(\texttt{rjt\_sub1\_motive\_fails}).
A hypothesis that applies a variable,
such as $\vz \cdot (\z = \z)$,
is unsatisfiable over assignments --
a variable head is stuck after closing --
so any judgment with that hypothesis
holds vacuously;
but \irl{sub} may substitute a $\lambda$-term
for the variable,
making the hypothesis satisfiable
while an arbitrary false conclusion persists.
First-order predecessors (\bga, \pga) never met
this phenomenon,
because their substituends could only be
numeral- or bottom-valued.
The repair strengthens the motive:
\emph{substitutional} judgment truth
(\texttt{sjt})
quantifies not over assignments but over
closing substitutions into arbitrary
closed well-formed terms --
$\lambda$-terms included.
Assignments embed as the special case of
value-term substitutions,
so the strengthened motive delivers
\cref{thm:sound} as a corollary,
and every substitution-shaped proof case
becomes compositional:
a closing substitution composed with
$[v{\mapsto}s]$ is again a closing substitution.

The second twist is the reflective one.
In the \irl{{\cA}I} case,
the semantics demands an actual certificate:
to make $\cA \cdot f$ grounded true,
some valid proof of the schematic instance
$\{\vz \jnat\} \vdash \lft f \cdot \vz$ must exist,
with no residual hypotheses.
The induction hypothesis supplies only
semantic truth of the premise judgment
$\vz \jnat, \lft\Gamma \vdash \lft f \cdot \vz$.
The gap is closed through completeness:
under any closing substitution satisfying $\Gamma$,
each hypothesis is grounded true,
hence derivable by \cref{cor:complete-open},
and cut then discharges the hypotheses
to produce the certificate.
Soundness of a single rule thus consumes
completeness of the whole system --
a reasoning path that is
well-founded only because completeness
was established first, independently of soundness.

\subsection{Determinism and Consistency}
\label{sec:metatheory:det}

\begin{thm}[determinism;
  \texttt{osr\_unique}, \texttt{hev\_det}, \texttt{osr\_hev\_excl}]
\label{thm:det}
Evaluation is deterministic:
if $t \bev{s} r$ and $t \bev{s'} r'$ then $r = r'$,
and if $t \hev{s} w$ and $t \hev{s'} w'$ then $w = w'$.
Moreover the two judgments are exclusive:
no term both head-evaluates and value-evaluates.
\end{thm}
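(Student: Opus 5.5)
The plan is a simultaneous induction on the sum (or pair) of the two evaluation derivations. We prove three claims together: value evaluations agree, head evaluations agree, and no term both head-evaluates and value-evaluates.

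\textbf{Syntax-directed clauses.} Every clause except the application clauses and the two reflective clauses is directed by the outermost constructor. For $\z$, $\suc a$, $a = b$, $\neg a$ and $a \lor b$, the premises of the two derivations concern the same immediate subterms. The induction hypothesis equates their values, so the conclusions agree. Disjunction needs care because it is Kleene-strong: one derivation may use the left $\tvT$ clause and the other the right $\tvT$ clause. That still yields $\tvT$ both times. A $\tvT$ clause against the $\tvF$ clause is ruled out, because the induction hypothesis forces the relevant disjunct to have both values $\tvT$ and $\tvF$.

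Exclusivity holds outright for the non-application constructors. A head shape $\lambda b$, $\cR$ or $\cA$ has no value clause, and a numeral-producing constructor has no head clause. For an application $t \cdot u$, both judgments first head-evaluate $t$. The induction hypothesis makes that head shape $w$ unique, so both derivations dispatch on the same $w$.
\begin{itemize}
\item If $w = \lambda b$, both derivations continue with the same $b[u]$.
\item If $w = \cR\,g\,h$, the value of the scrutinee is unique. This selects the same branch, whose evaluation is unique by induction.
\item If $w = \cR\,g$ or $\cR$, the result is a further head shape, so $t \cdot u$ only head-evaluates.
\end{itemize}
In each case exclusivity transfers because the two derivations continue on the same term.

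\textbf{Main obstacle: the reflective clauses.} When $t \hev{s} \cA$, the clauses \irl{allT} and \irl{allF} might both fire. That would require valid proofs of $\{\vz \jnat\} \vdash \lft u \cdot \vz$ and of $\emptyset \vdash \neg(u \cdot \tofn{i})$. This case cannot be handled by the structural induction, since these certificates are proofs, not evaluations. Here we invoke soundness (\cref{thm:sound}) together with consistency-style reasoning.

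From the first certificate we apply \irl{sub} with $\vz \mapsto \tofn{i}$, discharge $\tofn{i} \jnat$ by cut, and use conversion to obtain $\emptyset \vdash u \cdot \tofn{i}$. Applying \irl{{\neg}E} to this and the refutation $\emptyset \vdash \neg(u \cdot \tofn{i})$ then gives $\emptyset \vdash \bot \jnat$. This derivation is built syntactically, with no appeal to evaluation.

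By \cref{thm:sound}, $\bot \jnat$, that is $\bot = \bot$, would then be grounded true. So $\bot$ would evaluate to a value, but no clause evaluates $\bot$. This contradiction shows that \irl{allT} and \irl{allF} never both fire.

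The delicate point is well-foundedness. Soundness was proved before determinism, using completeness but not determinism, so citing it here is legitimate. I would double-check that the soundness proof of \irl{{\neg}E} and of the bridge rules does not silently assume that evaluation is functional. If it does, determinism must be proved jointly with soundness inside one induction on step index. That joint induction is the fallback I would try first.
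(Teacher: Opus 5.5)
Your proposal is correct and matches the paper's proof. Both use mutual rule induction with routine structural cases, and both resolve the \irl{allT}/\irl{allF} collision by instantiating the schematic certificate at $\tofn{i}$ via \irl{sub} and then appealing to soundness. Two small points: \irl{sub} already consumes the lift, so no separate conversion step is needed. Your extra \irl{{\neg}E} step to $\emptyset \vdash \bot \jnat$ is a nice touch, because the contradiction then follows by inversion on $\bot$, with no need for uniqueness of the value of $u \cdot \tofn{i}$ (whose evaluation is not a subderivation the induction hypothesis covers).

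Your well-foundedness worry is legitimate, since soundness of \irl{{\neg}E} itself needs some uniqueness of values. The paper breaks that cycle with the witnessed evaluation of \cref{sec:metatheory:arch}, not with the step-index double induction you propose as a fallback, which it calls workable but cumbersome. Witnessed determinism is a plain structural induction, soundness can use it, and the results are then transported to the official judgments.
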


\begin{proof}[Proof sketch]
By mutual rule induction.
Every structural case is routine.
The one interesting collision is
a universal $\cA \cdot u$ carrying
both a truth certificate and a falsity certificate:
a valid proof of the schematic instance
$\{\vz \jnat\} \vdash \lft u \cdot \vz$
and a valid refutation
$\emptyset \vdash \neg(u \cdot \tofn{i})$.
Instantiating the schematic proof
at the numeral $\tofn{i}$
(by the substitution rule;
the substitution consumes the lift,
landing both proofs on the same instance)
yields derivations of
$u \cdot \tofn{i}$ and the instance's negation --
which soundness (\cref{thm:sound})
turns into a semantic contradiction
(\texttt{osr\_cert\_collision}).
\end{proof}

Determinism of evaluation is thus
not a syntactic fact:
the semantics embeds a proof system,
and uniqueness of evaluation results rests on
the soundness of that proof system.
A reader accustomed to
proving determinism of a small-step relation
by inspection of the rules
may wish to pause on this point:
in \rga, the claim that ``the evaluator never disagrees with itself''
is a theorem \emph{about proofs}.

\begin{thm}[consistency; \texttt{rga\_consistent}]
\label{thm:consistent}
No statement is both provable and refutable:
there is no $c$ with
$\emptyset \vdash c$ and $\emptyset \vdash \neg c$.
\end{thm}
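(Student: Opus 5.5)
The plan is to derive consistency from soundness (\cref{thm:sound}) together with determinism (\cref{thm:det}). Suppose for contradiction that $\emptyset \vdash c$ and $\emptyset \vdash \neg c$. First, derivable conclusions are well-formed. The hypothesis context is empty, so it is vacuously satisfied by every assignment. I would fix one concrete assignment $A$, for instance the one sending every variable to $\bot$ (any assignment works), and apply \cref{thm:sound} to both judgments. This gives that $c$ is grounded true under $A$ and that $\neg c$ is grounded true under $A$.

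Next I unpack these facts through the evaluation clauses. Write $\hat c$ for the closure of $c$ under $A$; closing commutes with the connectives, so the closure of $\neg c$ is $\neg\hat c$. Grounded truth of $c$ gives $\hat c \bev{s} \tvT$ for some index $s$. Grounded truth of $\neg c$ gives $\neg\hat c \bev{s'} \tvT$ for some $s'$.

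The key step is to invert the second evaluation. The only value clause concluding a negation is the one in \cref{fig:sem}, which requires $\hat c \bev{s'-1} \tvF$. For this inversion to be exhaustive, I must check that no head-evaluation or application clause can produce a value for a term whose top constructor is $\neg$. That holds because application clauses fire only on terms of the form $t \cdot u$. Now $\hat c$ evaluates to both $\tvT = 1$ and $\tvF = 0$. Determinism (\cref{thm:det}) forces $1 = 0$, which is the desired contradiction.

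The main obstacle is not in this argument itself, which is a short corollary. It lies in the dependency chain it rests on. Determinism is needed precisely because a universal $\cA \cdot u$ could a priori carry both an \irl{allT} and an \irl{allF} certificate, and ruling that out already consumes soundness (\texttt{osr\_cert\_collision}). So I would make sure the argument invokes only the finished \cref{thm:sound} and \cref{thm:det}, without reopening that collision case. The other point to verify carefully is the inversion lemma for $\neg$ under the mutual, step-indexed judgments, including its interaction with the closing substitution. This should be a routine case analysis on the last evaluation rule.
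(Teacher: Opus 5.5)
Your proposal is correct and is the paper's own argument: apply soundness (\cref{thm:sound}) to both judgments under any assignment, invert the negation clause to get $\hat c \bev{s'} \tvF$ alongside $\hat c \bev{s} \tvT$, and contradict determinism (\cref{thm:det}). The inversion and closure-commutation checks you flag are the routine details that the paper's one-line sketch leaves implicit.
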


\begin{proof}[Proof sketch]
Soundness makes $c$ evaluate to both
$\tvT$ and (via the negation clause) $\tvF$
under any assignment,
contradicting determinism.
\end{proof}

The statement carries no well-formedness or
closedness hypothesis and quantifies over
all statements at once;
consistency of the hypothetical judgment
$\Gamma \vdash c$ for satisfiable $\Gamma$
follows in the same way.

\subsection{Truth Is Provability}
\label{sec:metatheory:iff}

The pieces now assemble into
the paper's central characterization.

\begin{thm}[the truth--provability coincidence;
  \texttt{rga\_open\_iff}, \texttt{rga\_val\_iff}]
\label{thm:iff}
For well-formed $c$, open or closed:
\begin{enumerate}
\item $\emptyset \vdash c$
  \;iff\; $c \bev{s} \tvT$ for some $s$;
\item $\emptyset \vdash c = \tofn{r}$
  \;iff\; $c \bev{s} r$ for some $s$.
\end{enumerate}
\end{thm}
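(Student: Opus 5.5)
The two right-to-left directions are already in hand. Part 2's ``if'' direction is exactly \cref{thm:complete-val}. Part 1's ``if'' direction is \cref{cor:complete-open}. Both hold for open as well as closed well-formed terms, since evaluation is defined on open terms too, with free variables stuck. So the work is in the left-to-right directions, which I would derive from soundness (\cref{thm:sound}), taking care over open terms.

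For part 2, suppose $\emptyset \vdash c = \tofn{r}$. The hypothesis context is empty, so \cref{thm:sound} makes $c = \tofn{r}$ grounded true under \emph{every} assignment. The plan is to pick a canonical assignment that makes closing act as the identity on $c$ as far as evaluation is concerned. The natural choice is the all-$\bot$ assignment, which substitutes $\bot$ for every free variable. Grounded truth of the equation gives some $s$ with $[\vec v{\mapsto}\bot]c \bev{s} m$ and $\tofn{r} \bev{s} r$ with $m = r$. Determinism (\cref{thm:det}) pins the numeral's value to $r$.

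The remaining step is a transfer lemma from closed back to open. It says that if $[\vec v{\mapsto}\bot]c \bev{s} m$, then $c \bev{s} m$ itself, and similarly for head evaluation. I would prove it by mutual induction on the evaluation derivation. Evaluation never inspects a $\bot$ (there is no clause for it), just as it never inspects a stuck free variable, so every clause of the derivation for the substituted term goes through for $c$. Substitution commutes with $\beta$-instances, and with the recursor's numeral arguments, which are closed. The \irl{allT} and \irl{allF} clauses need the certificate judgments to transfer. These are $\{\vz \jnat\} \vdash \lft u \cdot \vz$ and $\emptyset \vdash \neg(u \cdot \tofn{i})$ for the substituted body $u = [\vec v{\mapsto}\bot]u_0$, but what is needed are certificates for $u_0$.

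I expect this quantifier case to be the main obstacle. A certificate about $[\vec v{\mapsto}\bot]u_0$ is not literally one about $u_0$, and the \irl{sub} rule only substitutes in the opposite direction. I would try two fixes. The first is to strengthen the whole coincidence to the substitutional form already used for soundness, where the \texttt{sjt} motive quantifies over closing substitutions into arbitrary closed well-formed terms. There one closes $c$ by substituting \emph{fresh variables for themselves}, that is, by taking the identity as a degenerate ``closing'' substitution on open terms. The open-evaluation reading makes that legitimate, and the remark after \cref{thm:complete-val} about the lift in \irl{allT} stabilizing open evaluation under substitution is exactly what this needs.

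If that fails, the second fix is to apply soundness at the level of the \texttt{sjt} motive directly, instantiated with the identity substitution on open terms. That yields $c \bev{s} r$ without ever passing through $\bot$.

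Part 1 then follows from part 2. From $\emptyset \vdash c$, rule \irl{T{=}} gives $\emptyset \vdash c = \tofn{1}$, and part 2 gives $c \bev{s} \tvT$.
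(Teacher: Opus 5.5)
Your right-to-left directions (\cref{thm:complete-val}, \cref{cor:complete-open}) are fine, and so is the reduction of part~1 to part~2 via \irl{T{=}}. The left-to-right direction also works for \emph{closed} $c$: every assignment closes $c$ to itself, so \cref{thm:sound} gives $(c = \tofn{r}) \bev{s} \tvT$, and the equality clause yields $c \bev{s} r$. This is how the paper assembles the theorem from completeness and soundness. The gap is the open case, which you correctly flag as the obstacle but do not close. Your reduction needs the backward transfer $[\vec v{\mapsto}\bot]c \bev{s} m \Rightarrow c \bev{s} m$. At the \irl{allT}/\irl{allF} clauses, that means turning a derivation of $\{\vz \jnat\} \vdash \lft{([\vec v{\mapsto}\bot]u_0)} \cdot \vz$ into one of $\{\vz \jnat\} \vdash \lft{u_0} \cdot \vz$. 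Nothing available does this. \irl{sub} only instantiates variables. The paper's remark about the certificate lift gives the \emph{opposite} implication, $c \bev{s} r \Rightarrow [v{\mapsto}t]c \bev{s'} r$. A proof-level abstraction of $\bot$ back to a variable is not valid in general either, because \irl{\bot{}TE} singles out the literal constant $\bot$. For example, $\{\bot \jnat\} \vdash \z = \suc\z$ is derivable by \irl{hyp} and \irl{\bot{}TE}, while $\{v \jnat\} \vdash \z = \suc\z$ is not (soundness at $v \mapsto 0$). Hypotheses of this kind do arise inside certificates, in \irl{{\lor}E_1}, \irl{ind}, and \irl{{\neg\cA}E} branches. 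So an induction over the certificate breaks exactly there.

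Neither of your two fixes repairs this. The \texttt{sjt} motive ranges over substitutions into \emph{closed} terms, with truth meaning closed evaluation, so the identity on an open term is not an instance of it. ``Instantiating at the identity'' presupposes a different soundness theorem, one where truth is open evaluation and substitutions may be open. That theorem has to be proved, not invoked. One way to close the gap is to rerun the soundness rule induction using witnessed evaluation on possibly-open terms, with substitutions $\sigma$ into arbitrary well-formed terms:
\begin{itemize}
\item The \irl{{\cA}I} case still produces its certificate: apply \irl{sub} with the lifted $\sigma$, then \cref{cor:complete-open} to the (now open) hypotheses $\sigma a$, then \irl{cut}.
\item \irl{{\neg\cA}E} uses the counterexample carried by the witnessed \irl{allF} clause.
\item \irl{ind} and \irl{{=}subs} need a congruence lemma: replacing a term by the numeral it evaluates to preserves evaluation, with certificates transported by \irl{{=}subs}.
\end{itemize}
Dropping the witnesses then gives $c \bev{s} \tvT$ at the identity substitution directly, with no detour through $\bot$.
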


Provability is grounded truth;
provable equations are evaluations.
\rga is therefore an effective,
consistent proof system that is
\emph{complete for its own semantics} --
a combination \gdl's theorems deny
to every classical system
powerful enough to include arithmetic.
There is no conflict:
the classical argument manufactures
a true-but-unprovable sentence from
an assumed complete, effective system,
and this manufacture requires truth to be
closed under classical negation,
every sentence either true or false.
But grounded truth is not classical:
the diagonal sentence comes out
ungrounded rather than true,
as \cref{sec:omega} later makes precise.
Both directions of \cref{thm:iff} hold at
open terms,
so the coincidence extends to
schematic reasoning --
the form that quantifier introduction consumes.

One further corollary states
the \emph{habeas quid} discipline as a theorem.

\begin{thm}[\tnat-soundness; \texttt{rga\_N\_sound}]
\label{thm:nsound}
If $\emptyset \vdash t \jnat$,
then $t \bev{s} k$ for some step index $s$
and natural number $k$.
\end{thm}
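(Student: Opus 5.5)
The plan is to derive this from soundness (\cref{thm:sound}) and to read off a value from the evaluation clause for equality. Since `$t \jnat$' abbreviates $t = t$, the hypothesis is $\emptyset \vdash t = t$. The empty context is vacuously satisfied by every assignment, so soundness makes $t = t$ grounded true under every assignment $A$. In particular, fix a concrete assignment: either the one sending every variable to $\bot$, or (for definiteness) one sending every variable to $0$. Grounded truth then gives a step index $s$ with $A(t = t) \bev{s} \tvT$, where $A(\cdot)$ denotes closing under $A$.

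Next, I invert the evaluation derivation. A closed term of the form $a = b$ can only value-evaluate through the equality clause: no application clause or reflective clause has an equation at its root. Inversion therefore yields $s = s_0 + 1$ together with $A(t) \bev{s_0} m$ and $A(t) \bev{s_0} n$ for some $m, n$. So the closure of $t$ evaluates to a natural number $k = m$.

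It remains to transfer this value from the closed term $A(t)$ back to $t$ itself, since the statement asks for $t \bev{s} k$ on $t$ as given, possibly open. I see two routes and would try the second first.

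\textbf{Route (a).} Choose the all-$\bot$ assignment and prove a lemma: if $[\vec v \mapsto \bot]t \bev{s} k$, then $t \bev{s} k$. This holds because evaluation never inspects $\bot$ successfully. It has no clause, just as a free variable is stuck, so each derivation step over the substituted term lifts to the original. This requires a rule induction that passes through substitution in $\beta$-instances and through the \irl{allT}/\irl{allF} certificates. The latter are stable because of the lift $\lft u$, as noted after \cref{cor:complete-open}.

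\textbf{Route (b).} Avoid the transfer entirely using \cref{thm:iff}. From $A(t) \bev{s_0} k$ and \cref{thm:complete-val} (with $A(t)$ well-formed, since derivable conclusions are well-formed), we get $\emptyset \vdash A(t) = \tofn{k}$. I then need $\emptyset \vdash t = \tofn{k}$, after which \cref{thm:iff}(2) gives $t \bev{s} k$ directly.

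Getting from $A(t)$ back to $t$ syntactically is the main obstacle. Substitution only goes from $t$ toward $A(t)$, not backward. So route (b) needs a variant: apply \cref{thm:iff}(2) to $t$ using a semantic value for $t$ obtained another way. Absent that, route (a) is required.

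I therefore expect the real work to be the ``$\bot$-closing reflects evaluation'' lemma of route (a). Its key case is the reflective clauses. There, a certificate $P \proves (\{\vz\jnat\} \vdash \lft{([\vec v\mapsto\bot]u)}\cdot\vz)$ must be converted into one for $\lft u \cdot \vz$, and since $\bot$ is not a variable, substitution cannot invert it. To handle this I would first try to strengthen the lemma to state that a stuck subterm behaves identically whether it is a free variable or $\bot$. For the certificates, I would appeal to \cref{thm:iff}(1) together with soundness at the open level, rather than manipulating proof codes directly.
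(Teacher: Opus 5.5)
Your proposal has a genuine gap, and the detour through \cref{thm:sound} creates it. Soundness only tells you about the \emph{closure} $A(t)$, so you then have to carry evaluation from $A(t)$ back to the possibly open $t$, and neither of your routes does this. (For closed $t$ your argument is already complete, since $A(t)=t$; the statement, however, covers open $t$.)

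Route (b) stalls, as you note, because it needs $\emptyset\vdash t=\tofn{k}$. Route (a) rests on a ``$\bot$-closing reflects evaluation'' lemma that you do not prove, and its reflective case is not routine:
\begin{itemize}
\item The certificate $P$ cannot be converted rule by rule by replacing $\bot$ with a variable, because \irl{\bot{}TE} has no variable counterpart. For example, $\{\bot\jnat\}\vdash\z=\suc\z$ is derivable by \irl{hyp} and \irl{\bot{}TE}, while $\{v_1\jnat\}\vdash\z=\suc\z$ is not, by soundness at $v_1\mapsto 0$.
\item Your fallback of invoking \cref{thm:iff}(1) on the universal is circular. It turns the certificate into $\emptyset\vdash\cA\cdot[\vec v\mapsto\bot]u$, and then you need $\emptyset\vdash\cA\cdot u$. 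By \cref{thm:iff}(1) on both sides, that is exactly the same $\bot$-to-variable transfer for the term $\cA\cdot u$.
\end{itemize}

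The missing idea is that \cref{thm:iff} already holds at open well-formed terms, and part (1), not part (2), applies directly to your hypothesis. The paper treats \tnat-soundness as an immediate corollary of the truth--provability coincidence:
\begin{enumerate}
\item The formula $t=t$ is the conclusion of a derivable judgment, so it is well-formed.
\item From $\emptyset\vdash t=t$, \cref{thm:iff}(1) gives $(t=t)\bev{s}\tvT$ for some $s$, for the open $t$ itself.
\item Your own inversion step finishes the proof: the equality clause is the only value clause with an equation at its root. Hence $s=s'+1$ and $t\bev{s'}m$, so take $k:=m$.
\end{enumerate}
No assignments and no transfer lemma are needed. Handling open terms is already part of \cref{thm:iff}.
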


A provable totality claim is never vacuous:
behind every derivable `$t \jnat$' stands
an actual terminating computation
producing an actual natural number.
The upcoming \Cref{sec:power} leans on this fact repeatedly:
as \rga proves functions such as addition, multiplication,
or Ackermann's function to be total,
\cref{thm:nsound} converts each such theorem
into a termination guarantee for
the underlying computation.

\subsection{Witness-Preserving Proof Architecture}
\label{sec:metatheory:arch}

The results above form a cycle:
completeness feeds soundness,
soundness feeds determinism,
and all three concern a semantics
that itself contains proofs.
This subsection describes how
the formalization untangles the cycle.

The natural formal plan is
induction on the step index.
The index bounds each certificate's
G\"odel code,
so a certificate's sub-derivations live at
strictly smaller indices,
and every metatheoretic argument becomes
a descent through alternating layers of
evaluation and proof.
The plan works in principle,
but the numeric bound must be
threaded through every lemma along the way,
and uniqueness, exclusivity,
and the soundness induction each turn into
a delicate double induction --
on the index and on the derivation at once.

The formalization avoids the descent with
a \emph{witnessed} variant of
the evaluation pair
(judgments $\wbev{s}$ and $\whev{s}$):
clause-for-clause the definitions of
\cref{fig:sem},
except that the two quantifier clauses
additionally record, as premises,
the semantic facts their certificates promise.
The witnessed \irl{allT} clause reads
\[
\infrule[]{t \whev{s} \cA \quad
    \quo{P} \le s \quad
    P \proves (\{\vz \jnat\} \vdash \lft u \cdot \vz) \quad
    \forall n.\, \exists s_n.\;
      (u \cdot \tofn{n}) \wbev{s_n} \tvT}
  {(t \cdot u) \wbev{s+1} \tvT}
\]
-- the official premises,
plus the truth of every numeral instance --
and the witnessed \irl{allF} clause likewise
carries the actual falsity of
the refuted instance.
Two details matter.
First, the witness indices $s_n$ are
existentially quantified inside the premise,
deliberately unbounded:
the instance family need not evaluate
within any uniform bound,
and that unboundedness is exactly
the $\omega$-compression
the quantifier performs.
Second, the added premises are positive,
so the witnessed pair is still
an ordinary monotone inductive definition.
The payoff is that every uniqueness,
exclusivity, and soundness case now finds
the semantic facts it needs
inside the derivation at hand.
The step index carries no load,
and no double induction arises.

The two semantics then coincide.
Dropping the witness premises maps
each witnessed derivation onto
an official one.
The converse holds on closed terms
once soundness is proved --
soundness says exactly that
certificates deliver
what the witness premises assert --
so every witnessed-level theorem
transports to the official judgments.
One argument must be re-earned
at the official level:
the True/False certificate collision in
the determinism proof (\cref{thm:det}).
At the witnessed level,
the two certificates' witness premises
contradict each other immediately;
at the official level,
the collision requires instantiating
the schematic certificate,
as \cref{thm:det}'s proof sketch showed.
\Cref{sec:formal} adds
a historical note on how
this architecture was found.

%% file: power.tex
\section{Expressive Power}
\label{sec:power}

This section measures what \rga can express and prove.
The measurement has an external half and an internal half.
Externally, \rga's provable unary predicates are
characterized exactly:
they represent the recursively enumerable sets
(\cref{sec:power:re}).
Internally, \rga proves substantial arithmetic
as ordinary quantified theorems:
totality of addition, multiplication,
and -- beyond primitive recursion --
Ackermann's function
(\cref{sec:power:arith}).
Both halves rest on a verified compiler
from primitive-recursive function indices
into \rga terms (\cref{sec:power:compile}).

\subsection{A Verified Compiler from Primitive Recursion}
\label{sec:power:compile}

The development's computability backbone is
a two-argument primitive-recursive function calculus,
following Robinson's reduction of
general primitive recursion to
the two-argument form~\cite{robinson50essentially}.
A function index $f$ in this calculus denotes
a function $\den{f} : \mathbb{N}^2 \to \mathbb{N}$,
and the same calculus underlies
the G\"odel coding, the proof checker,
and the classical recursion theory
used throughout the formalization.

A compiler $\cmp$ maps each index $f$
to a closed well-formed \rga term,
built from $\lambda$ and the recursor $\cR$ alone.

\begin{thm}[compiler correctness; \texttt{cmp\_correct}]
\label{thm:cmp}
If $A \bev{s_a} a$ and $B \bev{s_b} b$
for closed $A$ and $B$,
then $(\cmp f) \cdot A \cdot B \bev{s} \den{f}(a,b)$
for some $s$.
\end{thm}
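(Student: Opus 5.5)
The plan is a structural induction on the primitive-recursive index $f$, proving a strengthened statement that quantifies over all closed argument terms $A$, $B$ together with their evaluations. The statement must be generalized in this way because the compiled recursion clause applies the compiled step function to unevaluated, call-by-name arguments. These arguments include the recursive call itself, a closed term that is not a numeral but whose value is supplied by an inner induction. Before the induction I would establish two small lemmas about the step-indexed semantics of \cref{fig:sem}. The first is \emph{monotonicity}: if $t \bev{s} r$ then $t \bev{s'} r$ for all $s' \ge s$. The second is \emph{numeral evaluation}: $\tofn{n} \bev{n+1} n$. Monotonicity lets me combine premises evaluated at different indices by taking the maximum index. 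No determinism or soundness is needed, since the claim only asserts existence of some evaluation.

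The base cases are the initial functions: zero, successor, and the two projections. Each is compiled as a curried $\lambda\lambda$ term. For these cases I would verify directly that head evaluation of $(\cmp f)\cdot A$ yields a $\lambda$ shape, and that the $\beta$ clause substitutes $A$ and $B$ unevaluated. The body then evaluates using the given derivations $A \bev{s_a} a$, $B \bev{s_b} b$, plus a successor or constant clause. The essential point is that substitution of closed terms commutes with the de Bruijn body, and I would prove this as a lemma about $b[t]$ for closed $t$.

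Composition-style indices (Robinson's two-argument combinators) compile to terms like $\lambda\lambda\, (\cmp g)\cdot((\cmp h_1)\cdot \vz' \cdot \ldots)\cdot\ldots$. I would discharge these by applying the induction hypotheses for the inner indices with the closed arguments $A$, $B$ to get evaluations of the inner applications. Those inner applications, themselves closed terms, are then fed as the arguments $A$, $B$ of the outer induction hypothesis. This nesting is exactly why the hypothesis must range over arbitrary closed terms rather than numerals.

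I expect the main obstacle to be the primitive-recursion index. It compiles to $\lambda\lambda\, \cR\cdot G\cdot H\cdot m$, with $G$ and $H$ built from the compiled subfunctions and the parameter. Here I would run an inner induction on the natural number $k$, proving that $\cR\cdot G\cdot H\cdot\tofn{k} \bev{s_k} \den{f}(k,\cdot)$, at whatever argument order Robinson's form fixes. In the successor step I use the recursor clause with scrutinee value $k+1$. The recursive call $\cR\cdot G\cdot H\cdot\tofn{k}$ is closed and has a known evaluation from the inner hypothesis, so the outer hypothesis for $h$ applies with arguments $\tofn{k}$ and that recursive call. Finally I connect the actual scrutinee $B$ (with $B \bev{s_b} k$) to the numeral version. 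The recursor clause only inspects the value of $m$, and dispatches the same syntactic numeral $\tofn{k}$ in the branch, so the derivation for $\tofn{k}$ transfers to $B$ after raising indices by monotonicity. Head evaluation of $\cR\cdot G\cdot H$ to the shape $\cR\,g\,h$ also needs its own small lemma from the omitted head clauses.
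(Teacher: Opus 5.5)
Your proposal is correct and follows the approach the paper indicates. That approach is a structural induction on the index $f$, generalized over arbitrary closed evaluating arguments as call-by-name composition requires, with composition compiled as application and the recursion schema as a single $\cR$ spine discharged by an inner induction on the scrutinee's value, and with monotonicity of the step index used to align premises. One small refinement: state the inner induction for every closed scrutinee that evaluates to $k$, not only for $\tofn{k}$, and the final ``transfer'' to $B$ becomes immediate without inverting any derivation, because the recursor clause's branch mentions only the syntactic numeral $\tofn{k}$.
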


Primitive recursion maps onto $\cR$ directly --
composition becomes application,
the recursion schema becomes one $\cR$ spine --
with no G\"odel-$\beta$ coding detour
through arithmetized sequence encodings.
The compiled term computes
call-by-name on arbitrary evaluating arguments,
so compiled functions compose freely
with all other \rga terms.

\subsection{Exactly the Recursively Enumerable Sets}
\label{sec:power:re}

Which unary predicates can \rga define and prove?
The formalization states recursive enumerability
computationally:
a predicate $p$ is r.e.\ when $p$ is
the termination domain of a general-recursive computation,
represented by a primitive-recursive step function.
The characterization then has two independent halves.

\begin{thm}[grounded truth is r.e.; \texttt{rga\_truth\_re}]
\label{thm:truth-re}
The set of (codes of) well-formed grounded-true formulas
is recursively enumerable.
\end{thm}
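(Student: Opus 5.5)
The plan is to route through the truth--provability coincidence (\cref{thm:iff}) rather than reason about the evaluation relation directly. The step-indexed semantics is awkward to enumerate head-on, because its quantifier clauses consult the entire proof system. Provability, by contrast, is a textbook $\Sigma_1$ notion.

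\textbf{Step 1: reduce truth to provability.} For well-formed $c$, \cref{thm:iff}(1) says $c \bev{s} \tvT$ for some $s$ iff $\emptyset \vdash c$. The set in question is therefore exactly the set of codes $\quo{c}$ with $\wf(c)$ and $\emptyset \vdash c$. Here I read ``grounded true'' for a possibly open formula at the closing used by the official definition. If the intended notion is truth under every assignment, I would instead use the open direction of \cref{thm:iff} together with soundness (\cref{thm:sound}), since $\emptyset \vdash c$ gives truth under all assignments. Conversely, open evaluation to $\tvT$ yields $\emptyset \vdash c$ by \cref{cor:complete-open}.

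\textbf{Step 2: provability is the projection of a primitive-recursive relation.} I would exhibit a primitive-recursive predicate $\mathrm{Chk}(p, x)$. It holds when $p$ codes a list of judgments in which every entry follows from earlier entries by one of the 41 rules, and when the list contains the judgment $\emptyset \vdash c$ with $\quo{c} = x$. The required ingredients are:
\begin{itemize}
\item decidability of well-formedness;
\item de Bruijn substitution and the lift $\lft{(\cdot)}$ as primitive-recursive operations on codes;
\item finite-set operations on contexts (coded canonically);
\item primitive-recursive decidability of the head-step relation $\hs$, which \cref{sec:system:proof} explicitly asserts.
\end{itemize}
Each rule check is a bounded search over earlier list entries, and the entries' codes are bounded by $p$. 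The \irl{sub}, \irl{{=}subs} and \irl{ind} rules require reconstructing the substituted term. That term is a subterm of the conclusion, or else is bounded by it, so a bounded search suffices. The set is then $\{x : \wf(x) \wedge \exists p.\,\mathrm{Chk}(p,x)\}$.

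\textbf{Step 3: match the paper's notion of r.e.} The paper defines r.e.\ as the termination domain of a general-recursive computation driven by a primitive-recursive step function. I would take the step function that, on input $x$ and counter $p$, halts iff $\wf(x) \wedge \mathrm{Chk}(p,x)$ and otherwise increments $p$. Its termination domain is exactly the set from Step~2. This is just unbounded search, expressed in the two-argument calculus of \cref{sec:power:compile}, and it reuses the G\"odel-coding and proof-checker infrastructure already mentioned there.

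\textbf{Main obstacle.} The main obstacle is Step~2. Formalizing a primitive-recursive proof checker over de Bruijn terms is laborious, and so is proving it correct against the inductive definition of valid proofs, especially the side conditions $i \notin \fv(\Gamma)$ and the lifted contexts in \irl{{\cA}I} and \irl{{\neg\cA}E}. Two tactics should help. First, I would build the checker rule by rule. Second, I would prove the soundness/completeness of $\mathrm{Chk}$ against $P \proves (\Gamma\vdash c)$ by induction on list length. The conceptual content, by contrast, is light: every reflective subtlety is absorbed by \cref{thm:iff}.
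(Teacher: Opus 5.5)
Your argument is correct, but it takes a different route from the paper. The paper proves \cref{thm:truth-re} directly on the semantics. Evaluation is an inductively defined step-indexed relation with finitely many premises per clause. The \irl{allT}/\irl{allF} clauses refer to derivability only positively, through a certificate with $\quo{P} \le s$. Proof checking is primitive-recursive. So a single step function enumerates evaluation derivations outright. The quantifier clauses are therefore not awkward to enumerate head-on, as you suggest: the enumerator simply calls the proof checker.

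You instead pass through \cref{thm:iff} to trade truth for provability and then enumerate proofs. This makes the theorem a corollary of the fact, used for the ($\Leftarrow$) half of \cref{thm:represent}, that derivability is \re; the paper itself notes this consequence right after that theorem. Your route is not circular, since \cref{thm:iff} is established in \cref{sec:metatheory} without any appeal to recursive enumerability. It also spares you a second arithmetization, of the mutually inductive head and value evaluation judgments, on top of the proof checker you need anyway.

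The cost is that your bound rests on the whole completeness--soundness development. The paper's argument reads the $\Sigma_1$ bound off the shape of the clauses, and would hold even if the proof system were incomplete for its semantics. That independence is what the paper wants to certify: the reflective quantifier by itself adds nothing beyond proof search.

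Finally, your hedge about an ``every assignment'' reading is unnecessary. The intended set is the well-formed $c$ with $c \bev{s} \tvT$ for some $s$, which is what the paper's sketch enumerates. As written, the hedge is also incomplete. \cref{cor:complete-open} starts from open evaluation to $\tvT$, and truth under all assignments does not immediately give you that.
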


The claim covers all of grounded truth --
quantifiers, unconstrained recursion,
and reflective certificates included --
and the proof is an exercise in
taking the semantics at its word.
Evaluation is an inductively defined,
step-indexed search;
the quantifier clauses search over proof codes,
themselves checkable primitive-recursively;
so a single step function can enumerate
evaluation derivations.
Grounded truth was designed to stay
within the reach of proof search,
and \cref{thm:truth-re} certifies that
the reflective quantifier did not
smuggle in anything stronger.

\begin{thm}[representation;
  \texttt{rga\_representable\_iff\_renp}]
\label{thm:represent}
A predicate $p$ is recursively enumerable
iff some closed well-formed term $t$ represents $p$:
\[
  p(n) \iff \emptyset \vdash t \cdot \tofn{n}
  \quad\text{for all } n.
\]
\end{thm}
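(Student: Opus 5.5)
The proof splits into two independent directions.

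\emph{Representable implies r.e.} Suppose the closed well-formed term $t$ represents $p$. For each $n$ the term $t \cdot \tofn{n}$ is closed and well-formed, so \cref{thm:iff} gives
\[
  p(n) \iff \emptyset \vdash t \cdot \tofn{n} \iff \exists s.\; t \cdot \tofn{n} \bev{s} \tvT .
\]
The map $n \mapsto \quo{t \cdot \tofn{n}}$ is primitive recursive. The set of codes of grounded-true well-formed formulas is r.e.\ by \cref{thm:truth-re}. Since r.e.\ sets are closed under primitive-recursive preimages, $p$ is r.e. To carry this out, I would compose the step function witnessing \cref{thm:truth-re} with the code map. Alternatively, I could enumerate proof codes directly: proof checking is primitive recursive, so derivability of $\emptyset \vdash t\cdot\tofn{n}$ is already an r.e.\ condition, and this route bypasses the semantics entirely.

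\emph{R.e.\ implies representable.} Let $p$ be r.e., presented as the termination domain of a general-recursive computation with a primitive-recursive step function. Concretely, take an index $f$ such that $p(n) \iff \exists k.\, \den{f}(n,k) = 0$, where $\den{f}(n,k)=0$ means the computation on $n$ has halted by step $k$. Using the fixpoint combinator $Y$, I define an unbounded search
\[
  \mathit{srch} \ldef Y \cdot \lambda\lambda\lambda\,\bigl(\text{if } (\cmp f)\cdot v_1\cdot v_0 = \z \text{ then } \tofn{1} \text{ else } v_2\cdot v_1\cdot(\suc v_0)\bigr).
\]
The conditional is realized through the recursor, for example $\cR\cdot\tofn{1}\cdot(\lambda\lambda\,\mathit{rec})\cdot(\ldots)$ dispatching on the compiled value. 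I then set $t \ldef \lambda\,(\mathit{srch}\cdot v_0\cdot\z)$.

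If $p(n)$ holds, I take the least halting $k$. Each call at $k' < k$ evaluates $(\cmp f)\cdot\tofn{n}\cdot\tofn{k'}$ to a nonzero value by \cref{thm:cmp} and recurses, and the call at $k$ returns $\tofn{1}$. Induction on $k - k'$ then builds an evaluation derivation $t\cdot\tofn{n} \bev{s} \tvT$, and \cref{cor:complete-open} yields $\emptyset \vdash t\cdot\tofn{n}$. Conversely, if $\emptyset \vdash t\cdot\tofn{n}$, then \cref{thm:iff} gives $t\cdot\tofn{n}\bev{s}\tvT$. Inverting the evaluation derivation along the search shows that the loop must have reached some $k$ with $\den{f}(n,k)=0$: by determinism (\cref{thm:det}) the compiled test takes its actual value at each step, and the search never returns $\tofn{1}$ without a halting step. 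Hence $p(n)$.

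\emph{Main obstacle.} I expect the hardest part to be the operational bookkeeping in the second direction. Call-by-name substitution means the argument $\suc v_0$ stays unevaluated, so at stage $k$ the search is applied to a term $\suc(\suc(\cdots\z))$ rather than the numeral $\tofn{k}$. I would handle this by stating the search lemma for any closed $K$ with $K \bev{} k$. \Cref{thm:cmp} is stated for arbitrary evaluating arguments, which makes this workable. The converse also needs an inversion lemma on $\bev{}$ for the unrolled $Y$-term, proved by induction on the step index $s$, with step-index monotonicity used to combine the subderivations. I would first attempt to encode the conditional with $\cR$ on the compiled value, so that \cref{thm:cmp} and the $\cR$ evaluation clauses alone drive both directions.
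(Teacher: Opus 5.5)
Your proof is correct. The ($\Leftarrow$) half, especially your second route of enumerating proof codes, is essentially the paper's argument. For ($\Rightarrow$) you take a genuinely different route.

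\textbf{The paper's route.} The paper builds no search loop. Its representing term is $\lambda\,\neg\bigl(\cA\cdot\lambda\,((\cmp f)\cdot v_1\cdot\vz=\z)\bigr)$, read as ``not every step rejects $n$,'' and the reflective quantifier does the unbounded search. If some step $i$ accepts, value completeness (\cref{thm:complete-val}) refutes the numeral instance at $\tofn{i}$, and \irl{{\neg\cA}I} (or the \irl{allF} clause) yields the negated universal. Conversely, a provable $\neg(\cA\cdot u)$ evaluates to true. The only clause that makes $\cA\cdot u$ false carries a refuted numeral instance, and soundness plus \cref{thm:cmp} turn that instance into an accepting step; this is the counterexample extraction of \cref{thm:markov}. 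So the paper's converse needs only a fixed-depth inversion plus metatheory it has already established.

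\textbf{Your route.} Your converse instead needs an inversion lemma along an unboundedly unrolled $Y$-loop. You prove it by induction on the step index, with \cref{thm:det} and \cref{thm:cmp} pinning down the value of each test. In exchange you get a stronger observation: your representing term is $\cA$-free. So every r.e.\ predicate is already representable by unconstrained recursion and the recursor alone, as in the quantifier-free grounded systems, and the quantifier is not needed for this theorem. The paper's route instead showcases that \rga's counterexample-certified semantics of refuted universals makes the existential dual effective.

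\textbf{Two small remarks.}
\begin{itemize}
\item Your convention that $\den{f}(n,k)=0$ means ``halted'' is the reverse of the paper's step functions, which return $0$ on non-halting steps. This is harmless: swap the recursor branches.
\item The call-by-name bookkeeping you anticipate is milder than you fear. Starting from $\z$, the accumulated argument $\suc(\cdots(\suc\,\z))$ is literally the syntactic numeral, since $\suc\,\tofn{k}$ is $\tofn{k+1}$. Generalizing to arbitrary evaluating $K$ is therefore safe but unnecessary.
\end{itemize}
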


\begin{proof}[Proof sketch]
($\Leftarrow$)
Derivability is proof search,
proofs are codable,
and proof checking is primitive-recursive,
so $\{n \mid \emptyset \vdash t \cdot \tofn{n}\}$
is the domain of a search computation.
($\Rightarrow$)
Given a step function for $p$
with index $f$ --
$p(n)$ iff some step $i$ accepts --
the representing term is
\[
  \mathit{rep}_f \;\ldef\;
  \lambda\, \neg\bigl(\cA \cdot
    \lambda\, ((\cmp f) \cdot v_1 \cdot \vz = \z)\bigr),
\]
read: ``not every step rejects $n$''.
The existential ``some step accepts''
is expressed by quantifier duality,
and \rga's semantics of refuted universals
makes the duality effective:
the universal inside $\mathit{rep}_f$ is
grounded \emph{false} exactly when
some numeral instance is refutable,
and the accepting step supplies
that counterexample --
which \cref{thm:complete-val} converts into
a refutation certificate.
Both directions then transfer between
truth and provability by
the coincidence \cref{thm:iff}.
\end{proof}

\Cref{thm:represent} is
a Church--Turing-style boundary statement.
\rga captures every effectively enumerable predicate --
with room to spare in how:
the representing term is
a single application $t \cdot \tofn{n}$,
positive occurrences only,
no coding of the predicate as a formula schema.
And \rga captures nothing more:
provability, and with it
(by \cref{thm:iff}) grounded truth,
remains \re{}
The boundary has a sharp consequence
for the quantifier,
developed in \cref{sec:omega}:
instance-wise truth of a $\Pi_1$ statement
is not \re, so the reflective universal
cannot coincide with the $\omega$-rule.

\subsection{Internal Arithmetic}
\label{sec:power:arith}

The external characterization says
which predicates \rga represents;
this subsection shows \rga
\emph{doing mathematics} --
in particular,
proving quantified totality theorems
by internal induction.
The informal-\gdl checklist of
\cref{sec:intro} is discharged by
exhibiting the actual objects.

Addition and multiplication are
first-class $\lambda$-terms over the recursor:
\begin{align*}
\mathit{add} &\;\ldef\;
  \lambda x\, y.\;
  \cR \cdot x \cdot (\lambda k\, r.\; \suc r) \cdot y
\\
\mathit{mult} &\;\ldef\;
  \lambda x\, y.\;
  \cR \cdot \z \cdot (\lambda k\, r.\;
    \mathit{add} \cdot x \cdot r) \cdot y
\end{align*}
These examples are
shown with named variables for readability;
the development uses de Bruijn indices.

\begin{thm}[\texttt{rga\_add\_total}, \texttt{rga\_mult\_total}]
\label{thm:addmult}
$\emptyset \vdash
  \forall.\forall.\;
  (\mathit{add} \cdot v_1 \cdot \vz) \jnat$
\quad and \quad
$\emptyset \vdash
  \forall.\forall.\;
  (\mathit{mult} \cdot v_1 \cdot \vz) \jnat$.
\end{thm}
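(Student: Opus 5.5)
The plan is to derive both statements syntactically inside \rga, using the rules of \cref{fig:rules,fig:rules-reflect}, rather than going through the truth--provability coincidence. Going through \cref{thm:iff} would need an \irl{allT} certificate, and producing that certificate is exactly the schematic proof we have to build anyway. For addition, apply \irl{{\cA}I} twice. It then suffices to derive
\[
  \vz \jnat,\; v_1 \jnat \vdash (\mathit{add} \cdot v_1 \cdot \vz) \jnat ,
\]
after the $\beta$-steps that \irl{conv} provides to unfold the outer $\lambda$ of each $\forall$ body. The call-by-name $\beta$-steps of $\mathit{add}$ are head steps, so \irl{conv}/\irl{convE} rewrite the goal to $(\cR \cdot v_1 \cdot h \cdot \vz)\jnat$ with $h \ldef \lambda k\,r.\;\suc r$. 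We then run \irl{ind} on $\vz$ with motive $c \ldef (\cR \cdot v_1 \cdot h \cdot \vz)\jnat$ and scrutinee $a \ldef \vz$, using the hypothesis $\vz \jnat$. To meet the freshness side condition $i \notin \fv(\Gamma)$, I will first weaken to a context in which the induction variable occurs only through the scrutinee. If needed, I will generalize by \irl{sub} from a fresh index and instantiate afterwards.

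\emph{Base case.} The step \irl{\cR\z} gives $\cR \cdot v_1 \cdot h \cdot \z \hs v_1$. Then \irl{conv} reduces the base case to $v_1 \jnat$, which is a hypothesis.

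\emph{Step case.} Here the context contains $m \jnat$ and $(\cR \cdot v_1 \cdot h \cdot m)\jnat$, with $m$ the induction variable. Since $m$ is a variable and not a numeral, the syntactic step \irl{\cR\suc} does not apply. This is exactly the situation the schematic rule \irl{\cR\suc{}{=}} exists for. Its side premise $(h \cdot m \cdot (\cR\cdot v_1\cdot h\cdot m))\jnat$ follows by \irl{conv} (two $\beta$-steps) from $\suc(\cR\cdot v_1\cdot h\cdot m)\jnat$, and that in turn follows by \irl{\suc{}TI} from the induction hypothesis. The rule then yields an equation $\cR\cdot v_1\cdot h\cdot(\suc m) = h\cdot m\cdot(\cdots)$. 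Combining it with the totality of the right-hand side through derived transitivity and \irl{=TE}, or through \irl{{=}subs} with context $p \ldef (v \jnat)$, gives $(\cR\cdot v_1\cdot h\cdot \suc m)\jnat$.

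For multiplication the skeleton is the same: base case $\z \jnat$ by \irl{\z{}I}, and induction on the second argument. The step case needs $(\mathit{add}\cdot x\cdot r)\jnat$ from $x\jnat$ and $r\jnat$, where $r$ is the recursive call. We get this by consuming the addition theorem already proved. Weaken it into the step context and apply \irl{{\cA}E} twice, with witnesses $x$ and then $r$, both certified by the context. Each application is followed by \irl{convE} along the $\beta$-step that instantiates the body. This yields $(\mathit{add}\cdot x\cdot r)\jnat$, which supplies the side premise of \irl{\cR\suc{}{=}}; we then finish as in the addition case.

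The main obstacle will be de Bruijn bookkeeping rather than any logical step. Three things have to match up exactly: the lifts $\lft\Gamma$ and $\lft f$ introduced by \irl{{\cA}I}, the substitution form $[i\mapsto\cdot]c$ of \irl{ind}, and the fact that \irl{{\cA}E} is plain application while the quantified body is itself a $\lambda$ whose own $\beta$-step must be converted. If this becomes unwieldy, my fallback is to first prove a general lemma: from $\Gamma,\,v_i\jnat\vdash c$ with $v_i$ fresh, derive $\Gamma\vdash\forall.\,c'$ for the appropriately abstracted $c'$. Both theorems would then be stated with named variables and the index arithmetic confined to that one lemma.
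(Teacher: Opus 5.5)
Your decomposition is the paper's: two uses of \irl{{\cA}I} with $\beta$-conversion of the bodies, then \irl{ind} on the recursion argument with \irl{\cR\suc{}{=}} for the step. For multiplication you also match it, with two uses of \irl{{\cA}E} on the addition theorem. However, one rule is misapplied repeatedly, and those steps fail as written. \irl{conv} and \irl{convE} fire only when the \emph{whole} conclusion takes a head step, and $\hs$ has no congruence through $=$: it is just $\beta$, the two recursor steps, and the spine rule in function position. A totality claim $t\jnat$ is the equation $t = t$, which never head-reduces. So \irl{conv} cannot turn $(\mathit{add}\cdot v_1\cdot\vz)\jnat$ into $(\cR\cdot v_1\cdot h\cdot\vz)\jnat$. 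Nor can it reduce the base case $(\cR\cdot v_1\cdot h\cdot\z)\jnat$ to $v_1\jnat$, or obtain the side premise $(h\cdot m\cdot(\cR\cdot v_1\cdot h\cdot m))\jnat$ from $(\suc(\cR\cdot v_1\cdot h\cdot m))\jnat$. Likewise, $(\mathit{add}\cdot x\cdot r)\jnat$ does not by itself ``supply'' the multiplication side premise. Your conversions right after \irl{{\cA}I} and \irl{{\cA}E} are fine, because there the conclusion is an application $(\lambda b)\cdot a$.

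The repair uses only moves you already make. For each head step $c\hs c'$ with $c'\jnat$ in hand, \irl{conv{=}} gives $c = c'$. Then \irl{{=}sym} and \irl{{=}subs} with $p\ldef(v\jnat)$ transfer totality from $c'$ to $c$, exactly as at the end of your addition step case. Every transfer you need runs from reduct to redex, which is the direction \irl{conv{=}} supports, since it demands the reduct's value.

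Freshness is also simpler than you expect. Note that \irl{wk} cannot drop $\vz\jnat$ anyway, and the scrutinee premise needs it. Instead take context $\{\vz\jnat, v_1\jnat\}$, a new index such as $i=2$, motive $(\cR\cdot v_1\cdot h\cdot v_2)\jnat$, and scrutinee $\vz$, so that $[2\mapsto\vz]c$ is exactly the goal. With these fixes the argument goes through and matches the paper's sketch.
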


Each proof is an internal induction (\irl{ind})
on the recursion argument,
with the conversion rules computing
the $\beta$- and $\cR$-steps
inside the motive.
The multiplication proof displays
a distinctive reflective idiom:
the induction step needs
totality of $\mathit{add} \cdot x \cdot r$
for the \emph{schematic} $x$ and $r$ at hand,
and obtains that totality by
\emph{using} \cref{thm:addmult}'s first half --
universal elimination (\irl{{\cA}E})
instantiates the already-proved
addition theorem at the current variables.
A quantified theorem is thereby
a reusable internal lemma,
which is exactly what separates
\rga's quantifier from
a schema of instance-wise provability:
a schema cannot be \emph{applied}
under another induction.

The recursor alone yields
all primitive-recursive functions,
but \rga's totality proofs do not
stop at primitive recursion.
Ackermann's classical
non-primitive-recursive
function~\cite{ackermann28hilbert}
is definable through
the fixpoint combinator --
$\mathit{ack} \ldef Y \cdot F$
for a step term $F$ that
nests one recursor inside another
and recurses through
the fixpoint variable --
and \rga proves the function total.

\begin{thm}[\texttt{rga\_ack\_total}]
\label{thm:ack}
$\emptyset \vdash
  \forall.\forall.\;
  (\mathit{ack} \cdot v_1 \cdot \vz) \jnat$.
\end{thm}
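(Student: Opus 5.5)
The plan is to mirror the classical double-induction proof of Ackermann totality inside \rga, making the inner totality a reusable quantified lemma in the same way the multiplication proof consumes the addition theorem. Fix the step term so that $\mathit{ack} \cdot m \cdot n$ head-reduces, through $Y$ and the $\beta$-rule, to $\cR \cdot g_0 \cdot H \cdot m \cdot n$. Here the outer recursor on $m$ has base branch $g_0 \ldef \lambda n.\,\suc n$. Its step branch, given $m'$ and the previous row $r$, returns the function $\lambda n.\,\cR \cdot (r \cdot \tofn{1}) \cdot (\lambda k\,z.\; r \cdot z) \cdot n$. The fixpoint variable is used to close the definition, but every unfolding can be carried out by the conversion rules \irl{conv}/\irl{convE} along head steps and by the schematic rule \irl{\cR\suc{}{=}}. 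Because the row function is a $\lambda$-term and never a value, I would phrase the motive not about $r$ but about $\mathit{ack}\cdot v_i$ applied to arguments. The outer motive is therefore
\[
  c(v_i) \;\ldef\; \cA \cdot \lambda.\,(\mathit{ack} \cdot v_{i+1} \cdot \vz) \jnat ,
\]
and the theorem follows from it by \irl{ind} on $m$, then \irl{{\cA}I} twice with the lifts handled by \irl{sub}.

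\textbf{Base case.} I need $\cA \cdot \lambda.\,(\mathit{ack}\cdot\z\cdot\vz)\jnat$. By \irl{{\cA}I} it suffices to prove $\vz\jnat \vdash (\mathit{ack}\cdot\z\cdot\vz)\jnat$. I head-convert to $\suc \vz$ (the $\cR\z$ step sits inside a spine, so \irl{spine} applies) and close with \irl{\suc{}TI}.

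\textbf{Step case.} The hypotheses are $v_i\jnat$ and $c(v_i)$, and the goal is $c(\suc v_i)$. After \irl{{\cA}I}, an inner induction (\irl{ind}) on the fresh $n$ proves $(\mathit{ack}\cdot(\suc v_i)\cdot n)\jnat$. In the inner base, $\mathit{ack}\cdot\suc m\cdot\z$ converts to $\mathit{ack}\cdot m\cdot\tofn{1}$, and its totality comes from the hypothesis $c(m)$ via \irl{{\cA}E} at $\tofn{1}$. In the inner step, \irl{\cR\suc{}{=}} rewrites $\mathit{ack}\cdot\suc m\cdot\suc n$ to $\mathit{ack}\cdot m\cdot(\mathit{ack}\cdot\suc m\cdot n)$. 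Its premises are exactly $n\jnat$ and the totality of the right-hand side. The inner induction hypothesis supplies $(\mathit{ack}\cdot\suc m\cdot n)\jnat$, and instantiating $c(m)$ by \irl{{\cA}E} at that term yields totality of the nested call. This is the reflective idiom: the outer hypothesis is a quantified theorem applied under another induction. Transitivity and \irl{{=}subs} then carry $\jnat$ across the equation.

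\textbf{Main obstacle.} I expect the difficulty to be bookkeeping rather than mathematics. First, \irl{\cR\suc{}{=}} states an equation about the recursor spine $\cR\cdot g\cdot h\cdot(\suc m)$. Here, however, the recursor is buried under an outer recursor step and the $Y$-unfolding, so $\mathit{ack}\cdot\suc m$ reaches the inner $\cR$ spine only after head steps that syntactically require a numeral $\suc\tofn{k}$ (the $\cR\suc$ head step), whereas $m$ is schematic. I would try first to restructure the outer unfolding so that it too goes through \irl{\cR\suc{}{=}} or \irl{ind} at the equation level. Failing that, I would state an auxiliary lemma, proved by \irl{ind}, that for $m\jnat$ the term $\mathit{ack}\cdot\suc m\cdot n$ is provably equal to the inner recursor form whenever either side is total. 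Second, the side condition $i\notin\fv(\Gamma)$ and the de Bruijn lifts for the nested \irl{{\cA}I} need care. The formalization would discharge this by stating the motive with explicit shifts and invoking \irl{sub} to rename.
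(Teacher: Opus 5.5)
Your skeleton is the paper's. It has an outer \irl{ind} on the first argument with the quantified motive, and an inner \irl{ind} on the second that instantiates the outer hypothesis by \irl{{\cA}E} at the recursive calls. The gap is the definition of $\mathit{ack}$ you fixed, and it is exactly the ``main obstacle'' you flag but leave open. Your outer recursor is row-valued: $\cR\cdot g_0\cdot H\cdot m$ denotes a function, and $n$ is applied outside the spine. The evaluator runs this higher-order recursion fine, but the proof system's only schematic recursor rule is first-order.

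In the outer step case the goal contains $\cR\cdot g_0\cdot H\cdot(\suc v_i)\cdot n$, and no rule unfolds it:
the $\cR\suc$ head step needs a syntactic numeral, and \irl{\cR\suc{}{=}} concludes a grounded \emph{equation} about the three-argument spine. Its premise $(H\cdot v_i\cdot(\cR\cdot g_0\cdot H\cdot v_i))\jnat$ asserts that a row is a number. A row is a $\lambda$-term that head-evaluates and so never value-evaluates, so soundness together with the exclusivity part of \cref{thm:det} rules this out under your satisfiable hypotheses. Hence your claim that every unfolding goes through \irl{conv}/\irl{convE} and \irl{\cR\suc{}{=}} is false here. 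Neither your inner base nor your inner step can get from $\mathit{ack}\cdot\suc m\cdot n$ to the inner recursor.

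Your fallbacks hit the same wall. First, no restructuring of the \emph{unfolding} lets a row-valued spine go through \irl{\cR\suc{}{=}}. Second, the auxiliary lemma by \irl{ind} must unfold $\cR\cdot g_0\cdot H\cdot(\suc(\suc v_i))$ in its own successor case. Its proviso ``whenever either side is total'' also cannot ride through \irl{ind}. Hypotheses mentioning $v_i$ are banned, and a material implication whose antecedent is some $x\jnat$ holds only when its consequent already does, since $x\jnat$ is never grounded false.

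The missing idea is the paper's choice $\mathit{ack}\ldef Y\cdot F$ with $F$ \emph{recursing through the fixpoint variable}. Make the outer recursor a number-valued case split on $m$, with $n$ threaded into both branches. Route the recursive calls through $F$'s self-argument instead of a previous-row argument, for example $F\ldef\lambda a\,m\,n.\;\cR\cdot(\suc n)\cdot(\lambda k\,z.\;\cR\cdot(a\cdot k\cdot\tofn{1})\cdot(\lambda j\,r.\;a\cdot k\cdot r)\cdot n)\cdot m$.

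Both spines now denote numbers, so \irl{\cR\suc{}{=}} unfolds each at a schematic successor. For the outer spine, call-by-name discards the previous value, so its premise reduces to totality of the inner recursor. That totality is exactly what the inner induction proves, consuming the outer quantified hypothesis via \irl{{\cA}E} at $\tofn{1}$ and at the previous inner value.

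A smaller bookkeeping point: \irl{conv{=}} moves totality only from a reduct back to its redex. So phrase your motives about the form the recursive calls take after unfolding (the self-application produced by $Y$), and convert back to $\mathit{ack}$ at the end.
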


The proof interlocks two internal inductions,
mirroring the textbook termination argument:
the outer induction (on the first argument)
proves the quantified statement
$\forall.\,(\mathit{ack} \cdot v_1 \cdot \vz) \jnat$,
and the inner induction (on the second argument)
consumes that outer induction hypothesis --
itself a universally quantified formula --
through \irl{{\cA}E} at
the recursive call's arguments.
An induction whose \emph{hypothesis}
is a quantified theorem,
instantiated at values computed
during the induction,
is precisely the reasoning pattern
the reflective quantifier exists to support.

Combined with \tnat-soundness
(\cref{thm:nsound}),
each totality theorem is
an actual termination guarantee:
from \cref{thm:ack} and \irl{{\cA}E},
every instance
$\mathit{ack} \cdot \tofn{m} \cdot \tofn{n}$
is provably a natural,
and behind each such theorem stands
a terminating evaluation
producing the value.
Nothing in these developments
steps outside the system:
the definitions use unconstrained recursion,
the proofs use internal induction,
and \emph{habeas quid} is
enforced -- and delivered -- throughout.

%% file: omega.tex
\section{Limit: \texorpdfstring{$\omega$}{omega}-Incompleteness}
\label{sec:omega}

The reflective quantifier grounds universal truth in
the system's own proof search.
A natural suspicion is that
this semantic move smuggles in the $\omega$-rule --
that ``grounded true'' for a universal
quietly means ``all numeral instances are true.''
\Cref{thm:truth-re} already refutes
this suspicion abstractly:
grounded truth is r.e.,
while instance-wise $\Pi_1$ truth is not.
This section makes the gap concrete,
exhibiting a particular family of statements whose
numeral instances are all provable
while their universal closure is unprovable --
and, more sharply, semantically ungrounded.
The witness is the classical halting diagonal,
compiled into \rga's own term language.

\subsection{The Diagonal Family}
\label{sec:omega:family}

The computability layer of the development provides
a universal interpreter as
a primitive-recursive step function:
$\mathit{sfApp}\,\langle x,x \rangle\,n$ executes
step $n$ of computation $x$
applied to input $x$,
returning $0$ when that step does not halt
the computation.
The halting diagonal is
$K \ldef \{x \mid
  \text{computation } x \text{ halts on input } x\}$,
so that
$x \notin K$ iff
$\mathit{sfApp}\,\langle x,x \rangle\,n = 0$ for all $n$.
$K$ is r.e.\ and the complement of $K$ is not --
the classical diagonal argument,
also formalized in the development.

Compiling the interpreter
(\cref{thm:cmp}) turns
the diagonal into \rga formulas:
\begin{align*}
u_x &\;\ldef\;
  \lambda\,\bigl(
    (\cmp\,\mathit{sfApp}) \cdot \tofn{\langle x,x \rangle}
      \cdot \vz = \z\bigr)
&
U_x &\;\ldef\; \cA \cdot u_x .
\end{align*}
The instance $u_x \cdot \tofn{n}$ says
``step $n$ does not halt computation $x$ on $x$'';
the universal $U_x$ says
``computation $x$ never halts on $x$'' --
a $\Pi_1$ statement.
Each instance is a terminating
primitive-recursive computation,
so value completeness (\cref{thm:complete-val})
settles each instance:
provable when the step rejects,
refutable when the step halts.

The family behaves cleanly on
the halting side.
For $x \in K$,
some instance is refutable,
and the counterexample rule \irl{{\neg\cA}I}
makes $U_x$ provably false.
For $x \notin K$,
every numeral instance of $U_x$ is provable.
The question is whether
the universals $U_x$ themselves are provable --
and not all of them can be.

\subsection{The Theorem}
\label{sec:omega:thm}

\begin{thm}[$\omega$-incompleteness;
  \texttt{rga\_omega\_incomplete}]
\label{thm:omega}
There is a closed well-formed $u$ with
$\emptyset \vdash u \cdot \tofn{n}$
for every $n$,
while $\emptyset \nvdash \cA \cdot u$.
\end{thm}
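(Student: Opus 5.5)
The plan is to take $u$ to be one of the diagonal bodies $u_x$ from \cref{sec:omega:family}, for a suitably chosen $x\notin K$, and to obtain that choice by a recursion-theoretic counting argument rather than by exhibiting it explicitly.

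\textbf{Step 1: all instances are provable off $K$.} For $x \notin K$, every step $n$ of computation $x$ on input $x$ rejects, so $\mathit{sfApp}\,\langle x,x\rangle\,n = 0$ for all $n$. By compiler correctness (\cref{thm:cmp}), applied to the numerals $\tofn{\langle x,x\rangle}$ and $\tofn{n}$ (which evaluate trivially), the compiled application evaluates to $0$. The equality clause then gives $(u_x \cdot \tofn{n}) \bev{s} \tvT$ after one $\beta$ head step, and open completeness (\cref{cor:complete-open}) yields $\emptyset \vdash u_x \cdot \tofn{n}$. Each $u_x$ is closed and well-formed because $\cmp$ produces closed well-formed terms.

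\textbf{Step 2: some such universal is unprovable.} Suppose instead that $\emptyset \vdash U_x$ for every $x \notin K$. Conversely, if $\emptyset \vdash U_x$, then soundness together with the coincidence \cref{thm:iff} makes $U_x$ grounded true. Should $x \in K$, the accepting step $n$ gives a refutable instance, so \irl{{\neg\cA}I} proves $\neg U_x$, contradicting consistency (\cref{thm:consistent}). Hence $\{x \mid \emptyset \vdash U_x\}$ would equal $\overline{K}$ exactly.

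The map $x \mapsto \quo{U_x}$ is primitive recursive, since it only substitutes a numeral into a fixed term. Derivability of closed terms is r.e., by proof search with primitive-recursive checking as in \cref{thm:represent}($\Leftarrow$), or via \cref{thm:truth-re} combined with \cref{thm:iff}. So $\overline{K}$ would be r.e., contradicting the formalized diagonal fact that $\overline{K}$ is not r.e. Therefore some $x \notin K$ has $\emptyset \nvdash U_x$, and $u \ldef u_x$ witnesses the theorem.

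\textbf{Main obstacle.} I expect the hard part to be the r.e.-closure bookkeeping: showing that preimages of an r.e.\ set under a primitive-recursive coding map are again r.e., stated in the development's step-function formulation of r.e. The semantic steps are direct applications of earlier results. What needs care is transporting \cref{thm:truth-re} (or plain proof search) along $x \mapsto \quo{U_x}$ and composing step functions. I would first try to reuse the machinery behind \cref{thm:represent}($\Leftarrow$), with the fixed body parameterized by the numeral.
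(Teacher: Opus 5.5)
Your proposal is correct and follows essentially the paper's own argument. You restrict to the diagonal family $U_x$, show that assuming every non-halting $U_x$ provable would make $\{x \mid \emptyset \vdash U_x\}$ equal to $\overline{K}$, and contradict the non-recursive-enumerability of $\overline{K}$ via the recursive enumerability of derivability. The only minor difference is the inclusion $\{x \mid \emptyset \vdash U_x\} \subseteq \overline{K}$: you argue it via \irl{{\neg\cA}I} and consistency, whereas the paper uses soundness to conclude that a provable $U_x$ makes every instance true. Both routes work.
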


\begin{proof}[Proof sketch]
Suppose instead that every closed well-formed $u$
with all instances provable had
a provable universal.
Then for the diagonal family,
$x \notin K$ iff $\emptyset \vdash U_x$:
left to right by instance-wise completeness
plus the supposed $\omega$-completeness,
and right to left by soundness --
a provable $U_x$ is grounded true,
so every instance is true,
so no step halts the computation.
But derivability is r.e.\
(\cref{thm:represent}, left-to-right direction),
so the equivalence would make
the complement of $K$ r.e.\ --
contradicting the diagonal argument
(\texttt{not\_rens\_compl\_diagK}).
\end{proof}

The proof is a counting argument
at the r.e.\ boundary,
and the statement is accordingly existential:
the diagonal family collectively outruns
every uniform proof method,
without pointing to
the particular $x$ where provability fails.
For many $x \notin K$ the universal $U_x$
is provable --
whenever the non-halting of $x$ has
a reason \rga can establish schematically.
What \cref{thm:omega} rules out is
any single effective system catching
all the non-halting diagonals at once.
\rga, being effective, is no exception,
and the reflective semantics does not
pretend otherwise.

\subsection{The Semantic Sharpening}
\label{sec:omega:ungrounded}

Classically, an unprovable $\Pi_1$ statement
with all instances true is
the paradigmatic \emph{true but unprovable} sentence.
\rga relocates this phenomenon.

\begin{cor}[$\omega$-ungroundedness;
  \texttt{rga\_omega\_ungrounded}]
\label{cor:ungrounded}
There is a closed well-formed $u$ with
$\emptyset \vdash u \cdot \tofn{n}$ for every $n$,
while $\cA \cdot u$ evaluates to true
at no step index.
\end{cor}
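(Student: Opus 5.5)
The plan is to reuse the witness $u$ from \cref{thm:omega} unchanged and to transfer its unprovability claim to the semantic side through the truth--provability coincidence (\cref{thm:iff}). Take the closed well-formed $u$ supplied by \cref{thm:omega}. It already satisfies $\emptyset \vdash u \cdot \tofn{n}$ for every $n$ and $\emptyset \nvdash \cA \cdot u$. So the only new obligation is that there is no step index $s$ with $(\cA \cdot u) \bev{s} \tvT$.

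Suppose toward contradiction that $(\cA \cdot u) \bev{s} \tvT$ for some $s$.
\begin{enumerate}
\item Since $u$ is closed and well-formed, $\cA \cdot u$ is too, because $\cA$ is a nullary constant and application preserves well-formedness.
\item Open completeness (\cref{cor:complete-open}), or equivalently direction one of \cref{thm:iff}, then gives $\emptyset \vdash \cA \cdot u$, contradicting \cref{thm:omega}.
\end{enumerate}
Alternatively, one can read the derivation directly. The only clause yielding $\tvT$ for an application whose head evaluates to $\cA$ is \irl{allT}; note that $\cA \hev{s} \cA$ and, by exclusivity (\cref{thm:det}), no other head shape or value clause applies. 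That clause supplies a certificate $P \proves (\{\vz \jnat\} \vdash \lft u \cdot \vz)$. Since $u$ is closed, $\lft u = u$, and \irl{{\cA}I} with $\Gamma = \emptyset$ yields $\emptyset \vdash \cA \cdot u$, the same contradiction. Either route is short. I would present the first and mention the second as the intuition: grounded truth of a universal \emph{is} schematic provability.

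To justify the word ``ungrounded'', I would also remark that $\cA \cdot u$ is not grounded false. Falsity would require a refutation $\emptyset \vdash \neg(u \cdot \tofn{i})$. Together with the provable instance $\emptyset \vdash u \cdot \tofn{i}$, this contradicts consistency (\cref{thm:consistent}). Hence $\cA \cdot u$ has no truth value at any stage, even though every instance is provable and hence, by \cref{thm:iff}, grounded true.

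The main obstacle is bookkeeping rather than mathematics. The one point that needs care is the well-formedness side condition required to apply \cref{cor:complete-open}, together with the closedness fact $\lft u = u$ if the direct route is used. Both follow from $u$ being closed and well-formed as delivered by \cref{thm:omega}. The mild subtlety is ensuring that the $u$ from \cref{thm:omega} is taken as a fixed witness, since that theorem's proof is a non-constructive counting argument. The corollary inherits that existential character rather than naming a specific $x$.
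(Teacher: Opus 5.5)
Your proposal is correct and follows the paper's own argument: take the witness of \cref{thm:omega}, convert its unprovability of $\cA \cdot u$ into the absence of any evaluation to $\tvT$ via the completeness direction of \cref{thm:iff}, and rule out grounded falsity by pairing a refutation certificate with the provable instance and invoking \cref{thm:consistent}. Your alternative direct route through \irl{allT} and \irl{{\cA}I} also works, and it does not even need $\lft u = u$, since the certificate's judgment is literally the premise of \irl{{\cA}I} with $\Gamma = \emptyset$.
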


The corollary is immediate from
\cref{thm:omega} and the coincidence
\cref{thm:iff}:
in \rga, unprovable is not-grounded-true.
Nor can the witness universal be grounded false:
a falsity certificate would refute
some numeral instance,
and every instance is provable --
contradicting consistency
(\cref{thm:consistent}).
The witness universal is ungrounded outright:
the reflective search finds
neither a schematic proof
nor a counterexample,
ever.

The relocation is the point.
The classical reader expects
a true-but-unprovable sentence;
\rga instead presents
an \emph{ungrounded} sentence --
one that the system's notion of truth
declines to decide,
for exactly the same reason
the system's notion of proof does so.
Truth and provability move together
(\cref{thm:iff}),
and what gives is
the classical conviction that
instance-wise truth must sum to
universal truth.
Summing infinitely many instance proofs
is the $\omega$-rule,
an infinitary ground.
The reflective quantifier accepts
only an effective ground --
one schematic proof,
uniform in the variable.
The gap between the two is
not an artifact of \rga's design
but the boundary of
recursive enumerability itself.
No effective notion of quantifier truth,
however grounded,
can close the gap.
\rga's contribution is to place the gap
inside the semantics,
rather than between
the semantics and the proof system.

%% file: character.tex
\section{The Logic's Character}
\label{sec:character}

The preceding sections measured \rga's strength and limits;
this section locates \rga on the logical map.
The coordinates are unusual.
Double-negation elimination holds,
so the logic is not intuitionistic;
excluded middle fails --
explosively, as shown below --
so the logic is not classical;
refuted universals yield explicit counterexamples,
a Markov-flavored trait;
and the deduction theorem's abstraction direction fails,
a substructural trait.
Each coordinate traces back to
the same source:
truth is grounded evaluation,
and the rules commit to
nothing that evaluation does not deliver.

\subsection{Between Classical and Intuitionistic}
\label{sec:character:between}

Double-negation elimination is
a primitive rule (\irl{{\neg}{\neg}})
and is simply sound:
negation flips boolean values,
so a doubly negated formula has
the value of the formula itself.
The intuitionistic reading of negation --
a refutation is a construction,
and two refutations do not make a proof --
has no purchase here.

Excluded middle fails in a stronger sense
than mere unprovability.
Recall the Liar term
$L \ldef Y \cdot (\lambda\, \neg \vz)$,
which converts to
the Liar's own negation in two head steps.
The Liar is neither provable nor refutable
(\texttt{rga\_liar\_}\allowbreak\texttt{unprovable},
\texttt{rga\_liar\_irrefutable}) --
$L$ has no value,
and by \cref{thm:iff} \rga
accordingly proves nothing about $L$.
But the excluded-middle
\emph{hypothesis} for $L$ is explosive:

\begin{thm}[\texttt{rga\_liar\_lem\_explosive},
  \texttt{rga\_lem\_scheme\_explosive}]
\label{thm:lem}
For every well-formed $c$:
$\;L \lor \neg L,\, \Gamma \vdash c$.
Consequently no consistent extension of \rga
validates the scheme
$\Gamma \vdash a \lor \neg a$
for all well-formed $a$.
\end{thm}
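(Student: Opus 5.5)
We argue by disjunction elimination (\irl{{\lor}E_1}) on the hypothesis $L \lor \neg L$ and derive $c$ in both branches. Write $\Gamma' \ldef \{L \lor \neg L\} \cup \Gamma$; by \irl{hyp}, $\Gamma' \vdash L \lor \neg L$. The key ingredient is that $L$ and $\neg L$ are interderivable under any context. Since $L = Y \cdot (\lambda\,\neg\vz)$ head-steps in two $\beta$-steps to $\neg L$ (more precisely, to $\neg(X \cdot X)$ for the inner self-application $X$, which is $L$ up to one further head step, or we can define $L$ directly as that self-application), we get $L \hs \cdots \hs \neg L$. I would first fix the exact reduct syntactically and, if the reduct is $\neg L''$ with $L'' \hs^{*} L$ or $L \hs^{*} L''$, handle it by \irl{conv}/\irl{convE} under the negation via the same chain. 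The tools are \irl{conv} and \irl{convE}: from $\Delta \vdash L$, \irl{convE} gives $\Delta \vdash \neg L$, and from $\Delta \vdash \neg L$, \irl{conv} gives $\Delta \vdash L$.

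In the branch with extra hypothesis $L$, \irl{hyp} yields $\Gamma', L \vdash L$ (well-formedness of $L$ is immediate). Two \irl{convE} steps give $\Gamma', L \vdash \neg L$, and \irl{{\neg}E} with $\wf(c)$ gives $\Gamma', L \vdash c$. In the branch with extra hypothesis $\neg L$, \irl{hyp} gives $\Gamma', \neg L \vdash \neg L$, \irl{conv} gives $\Gamma', \neg L \vdash L$, and \irl{{\neg}E} again gives $c$. Then \irl{{\lor}E_1} closes to $\Gamma' \vdash c$, which is $L \lor \neg L, \Gamma \vdash c$.

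For the consequence, suppose an extension (by additional rules, keeping those of \cref{fig:rules,fig:rules-reflect}) derives $\Gamma \vdash a \lor \neg a$ for all well-formed $a$. Taking $\Gamma = \emptyset$ and $a = L$ gives $\vdash L \lor \neg L$. Applying \irl{cut} with the first part at $\Gamma=\emptyset$ and $c = \z = \z$, and then with $c = \neg(\z = \z)$, derives both a statement and its refutation, so the extension is inconsistent in the sense of \cref{thm:consistent}. We need derivations in \rga to remain derivations in the extension, which holds since we only add rules.

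The main obstacle is purely syntactic bookkeeping: verifying that the head-step chain from $L$ really lands on a term of the shape $\neg L$ (literally, not merely up to further reduction), given de Bruijn shifting in $\beta$-instances of $Y$. If the reduct is $\neg(W \cdot W)$ rather than $\neg(Y\cdot(\lambda\neg\vz))$, the cleanest fix is to run the whole argument with $L^\ast \ldef W \cdot W$, which reduces in one $\beta$ step to exactly $\neg L^\ast$ after the body $\lambda\neg\vz$ is applied. Alternatively, we can use \irl{{=}subs} or \irl{conv} on the subterm position inside $\neg$ to bridge $L$ and $W\cdot W$. Everything else is a direct rule application.
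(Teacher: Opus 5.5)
Your proof is correct and is essentially the paper's own argument. You use \irl{{\lor}E_1} on the excluded-middle hypothesis, convert between $L$ and $\neg L$ in each branch and explode via \irl{{\neg}E}, and then use \irl{cut} against a provable $L \lor \neg L$ for the consequence. Your bookkeeping caveat is well placed: the paper's sketch relies on $L$ head-reducing in two steps literally to $\neg L$, which holds for the self-application $W \cdot W$ (your $L^\ast$). Your alternative bridge via conversion or \irl{{=}subs} inside $\neg$ would not work, however, since no head step goes under $\neg$, and the equation $L = W \cdot W$ that \irl{{=}subs} needs is available from \irl{conv{=}} only given $W \cdot W \jnat$.
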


The derivation is two conversion steps
per branch and nothing else:
assuming $L$, conversion yields $\neg L$
and explosion fires;
assuming $\neg L$, conversion yields $L$ again.
No G\"odel coding, no diagonal lemma --
unconstrained recursion supplies
self-reference directly,
and excluded middle is the exact classical
commitment that self-reference breaks.
Excluded middle is not, however, simply absent:
the boolean type check
$a \jbool \ldef a \lor \neg a$
\emph{is} excluded middle,
available wherever it is first proved (\emph{habeas quid}).
\rga thus has excluded middle
as a \emph{judgment} --
earned formula by formula,
exactly on the boolean-grounded fragment --
rather than as an axiom scheme.
All of the classical laws hold on decided formulas;
the dynamic type check is
the license to use them.

\subsection{Markov's Principle, With Witnesses}
\label{sec:character:markov}

Markov's principle --
from $\neg\neg\exists$ conclude $\exists$,
for decidable matrices --
distinguishes the Russian constructivist school
from intuitionism.
\rga validates a strengthened,
witness-extracting form of the principle,
not as an added axiom but as
a consequence of how
universal falsity is grounded.

\begin{thm}[counterexample extraction;
  \texttt{rga\_not\_all\_witness}]
\label{thm:markov}
If $\emptyset \vdash \neg(\cA \cdot u)$,
then $\emptyset \vdash \neg(u \cdot \tofn{n})$
for some numeral $\tofn{n}$.
\end{thm}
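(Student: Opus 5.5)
The plan is to go through the semantics rather than through the proof rules directly: by the truth--provability coincidence (\cref{thm:iff}) a provable refutation becomes a terminating evaluation, and the only evaluation clause that can make a universal false carries a counterexample certificate by construction.

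First, from $\emptyset \vdash \neg(\cA \cdot u)$, note that $\neg(\cA \cdot u)$ is well-formed, since derivable judgments always have well-formed conclusions. The forward direction of \cref{thm:iff}(1) then gives $\neg(\cA \cdot u) \bev{s} \tvT$ for some $s$. Inverting the evaluation relation, the only clause producing a value for a negation is the negation clause, so $\cA \cdot u \bev{s-1} \tvF$.

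Second, invert the application clauses for $\cA \cdot u \bev{s-1} \tvF$. The head of the application is the bare constant $\cA$, which head-evaluates only to the shape $\cA$ by determinism (\cref{thm:det}). That rules out the $\beta$-clause and both recursor clauses, whose premises require head shapes $\lambda b$ or $\cR\,g\,h$. The \irl{allT} clause yields $\tvT \ne \tvF$, so it is excluded as well. The only remaining possibility is \irl{allF}, whose premises supply a valid proof $P$ with $P \proves (\emptyset \vdash \neg(u \cdot \tofn{i}))$ for some numeral $\tofn{i}$. This $P$ is exactly the derivation the theorem asks for, with $n = i$.

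The main obstacle is bookkeeping around the inversion. I must confirm that exclusivity of head and value evaluation, together with the head-evaluation clauses for a constant, leaves no other route to $\tvF$. In particular, I need to check that the recursor application clauses cannot fire with head $\cA$.

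I also have to handle open $u$: the statement quantifies over arbitrary $u$, while the evaluation clauses in \cref{sec:system:sem} are presented on closed terms. Since \cref{thm:iff} is stated for open terms and the \irl{allF} certificate refers to $u$ itself, no closing substitution should be needed. If the formalization instead routes through closures, I would rely on the paper's remark that open evaluation is stable under substitution and read the certificate off the unclosed derivation. Failing that, I would apply \irl{sub} with the identity assignment on $\bot$-free closings.
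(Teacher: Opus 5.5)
Your proof is correct and is the paper's argument. You turn the provable refutation into an evaluation of $\neg(\cA\cdot u)$ to $\tvT$, invert the negation clause, and note that \irl{allF} is the only clause giving $\cA\cdot u$ the value $\tvF$, so its certificate is the required derivation. The only difference is that you cite the open-term forward direction of \cref{thm:iff} where the paper cites soundness (\cref{thm:sound}); this reads the certificate off for $u$ itself with no closing substitution, so your closing fallback via \irl{sub} is unnecessary (and would not help, since \irl{sub} only specializes).
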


\begin{proof}[Proof sketch]
A provable $\neg(\cA \cdot u)$ is
grounded true by soundness,
so $\cA \cdot u$ evaluates to false --
and the only clause producing that value,
\irl{allF},
carries a refuted numeral instance.
The refutation certificate is
itself a derivation of
$\neg(u \cdot \tofn{n})$.
\end{proof}

Since the existential is
the dual $\exists.\,b \ldef
\neg(\cA \cdot \lambda\neg b)$,
\cref{thm:markov} says:
every provable existential is witnessed,
at the level of the metatheory,
by a provable numeral instance.
Markov's principle proper follows:
$\neg\neg\exists$ collapses to $\exists$
by double-negation elimination,
and the $\exists$ thereby carries a witness.
Where the Russian school postulates
an unbounded search principle,
\rga's semantics \emph{is} the search:
falsity of a universal was defined as
a found counterexample,
so witness extraction is
an inversion, not an axiom.

\subsection{The Deduction Theorem, Halved}
\label{sec:character:dt}

Implication in \rga is material by definition,
$a \limp c \ldef \neg a \lor c$,
and the use direction of
the deduction theorem is an ordinary lemma
(\texttt{ir\_dt\_use}):
from $\Gamma \vdash \neg a \lor c$
follows $a, \Gamma \vdash c$,
by case analysis on the disjunction.
The abstraction direction fails, however.

\begin{thm}[\texttt{rga\_deduction\_theorem\_fails}]
\label{thm:dt}
$L, \emptyset \vdash L$,
but $\emptyset \nvdash \neg L \lor L$.
\end{thm}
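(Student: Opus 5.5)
The statement has two halves, and I would prove them separately. The positive half, $L, \emptyset \vdash L$, is a single application of the structural rule \irl{hyp}. Its only side condition is $\wf(L)$. That holds because $L \ldef Y \cdot (\lambda\,\neg\vz)$ is built from $\lambda$, application, negation and de Bruijn variables at their correct arities and binder shapes, with $Y$ itself a closed well-formed term.

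For the negative half, $\emptyset \nvdash \neg L \lor L$, I would argue semantically through the truth--provability coincidence. Suppose $\emptyset \vdash \neg L \lor L$. The formula is closed and well-formed, so \cref{thm:iff} gives $(\neg L \lor L) \bev{s} \tvT$ for some $s$. Inverting the evaluation rules of \cref{fig:sem}, only the two Kleene disjunction clauses produce $\tvT$ for a disjunction. So either $\neg L \bev{s-1} \tvT$ or $L \bev{s-1} \tvT$.

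In the first case, the only negation clause forces $L \bev{s-2} \tvF$. Either way, then, $L$ value-evaluates to some boolean, and by \cref{thm:iff}(2) this yields $\emptyset \vdash L = \tofn{0}$ or $\emptyset \vdash L = \tofn{1}$. The bridge rules \irl{F{=}} and \irl{T{=}} turn these into $\emptyset \vdash \neg L$ or $\emptyset \vdash L$, respectively. That contradicts the already-established facts \texttt{rga\_liar\_unprovable} and \texttt{rga\_liar\_irrefutable} cited before \cref{thm:lem}.

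If I may not rely on those facts, I would prove directly that $L$ has no value evaluation. The plan is to take a derivation of $L \bev{s} r$ with $s$ minimal and show that it passes through $\neg L \bev{s'} r'$ with $s' < s$. That in turn requires $L \bev{s''} 1-r'$ with $s'' < s'$, after following the two head steps $L \hs (\lambda\neg\vz)\cdot(\ldots)\hs\neg L'$, where $L'$ is the self-application reduct and itself head-reduces to the same shape. This gives an infinite descent in the step index.

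The main obstacle is exactly this inversion through the fixpoint unfolding. The head evaluation of $Y \cdot (\lambda\neg\vz)$ produces the term $\neg(W\cdot W)$ rather than literally $\neg L$, so the descent must be stated for the family $\{L, W \cdot W\}$, where $W \ldef \lambda\,((\lambda\neg\vz)\cdot(\vz\cdot\vz))$. The claim to prove is that no member of this family value-evaluates, and that each member head-evaluates only to shapes whose application leads back into the family. I would set this up as a strong induction on $s$ and appeal to the determinism of \cref{thm:det} to exclude the reflective \irl{allT} and \irl{allF} clauses. Those clauses are excluded because the head of the term is never $\cA$.
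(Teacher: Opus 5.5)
Your proposal is correct and follows the paper's own argument. The positive half is a single \irl{hyp} step. For the negative half, soundness (via \cref{thm:iff}) forces a grounded-true disjunct and hence a boolean value for the Liar, which it does not have. Your detour through \cref{thm:iff}(2) and \irl{T{=}}/\irl{F{=}} to reach \texttt{rga\_liar\_unprovable}/\texttt{rga\_liar\_irrefutable} is harmless, though you could instead contradict ``$L$ has no value'' directly, as the paper does. Your fallback descent over the family $\{L, W\cdot W\}$ is also sound, and your observation that the fixpoint unfolding lands on $\neg(W\cdot W)$ rather than literally $\neg L$ correctly sharpens the paper's informal ``converts to its own negation'' remark.
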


The hypothetical judgment is
the hypothesis rule;
the internalized implication would need
a grounded-true disjunction,
which requires a decided disjunct --
and the Liar has none.
The diagnosis generalizes.
A hypothetical judgment
$a, \Gamma \vdash c$ promises only:
\emph{if} $a$ turns out grounded true,
then $c$ follows.
The implication $a \limp c$ or $\neg a \lor c$ promises more:
a disjunction with at least one side unconditionally true.
For ungrounded $a$,
this is a groundedness commitment
that the hypothetical judgment never made.
Hypothesis discharge fails
exactly at ungrounded hypotheses,
and recovers exactly with groundedness:
when `$a \jbool$' is provable,
abstraction is derivable by
case analysis on $a \lor \neg a$.
Hypothetical reasoning in \rga is thus
strictly more liberal than implication --
hypotheses cost nothing
(\cref{sec:system:proof}),
while implications assert
decided alternatives.
The Liar here plays only
the role of a convenient
ungrounded hypothesis;
a systematic study of paradoxes
in grounded reasoning is
a separate line of work~\cite{srikanth26formalizing}.

\subsection{An Instance of the Grounded-Deduction Interface}
\label{sec:character:gd}

Finally, the character claimed for \rga --
a member of the \gd family,
not an ad hoc system --
is machine-checked rather than asserted.
The development defines
generic \gd interface locales:
a propositional core
(\texttt{gd\_prop\_minimal}:
negation, disjunction, explosion,
double negation, hypothesis discipline)
and arithmetic and quantifier rule interfaces
(\texttt{ga\_nat\_*},
\texttt{ga\_allI}, \texttt{ga\_allE},
\texttt{ga\_not\_allI}).
\rga interprets all of these interfaces:
each is instantiated by
the corresponding \rga rules,
with the interpretation obligations
discharged from the 41 primitives.
Lemmas proved once against the interfaces
transfer to every interpreting system --
\pga interprets the same propositional core --
so the family resemblance among
the grounded systems is
itself a formal artifact,
not a slogan.
\Cref{sec:formal} lists the interpretations.

%% file: related.tex
\section{Related Work}
\label{sec:related}


\rga sits at the intersection of several traditions --
theories of truth, combinatory logic,
proof theory, and constructive semantics --
and differs from each in a characteristic way.
One quantity organizes the comparison:
the complexity of the truth set.
Kripke's least fixed point is
$\Pi^1_1$-complete~\cite{burgess86truth};
the set of realizable sentences is
Turing-equivalent to true
arithmetic~\cite{kleene45interpretation,troelstra73metamathematical};
\rga's grounded truth is
recursively enumerable (\cref{thm:truth-re}).
The author is unaware of any prior semantic theory of
self-applicable truth that sits in $\Sigma_1$.
The alternative design choices surveyed below are,
in each tradition,
the price or the mechanism of
getting there.

\subsection{Theories of Truth}
\label{sec:related:truth}

Kripke's fixed-point
construction~\cite{kripke75outline} is
\rga's direct ancestor
(\cref{sec:bg:kripke}),
and the quantifier clause is
exactly where \rga departs.
Kripke's universal is true when
all numeral instances are true --
the $\omega$-style clause that drives
the fixed point to
$\Pi^1_1$-completeness~\cite{burgess86truth}.
No r.e.\ truth predicate can satisfy
that clause;
\rga replaces the clause with
reflected proof search
and accepts $\omega$-incompleteness
(\cref{sec:omega}) as the exact cost.
\rga can thus be read as
an effective counterpart of
Kripke's construction:
what grounded truth becomes when
the $\omega$-rule is traded for
the system's own proof search.

The axiomatic-truth tradition
describes Kripke's semantics
proof-theoretically.
Kripke--Feferman
(KF)~\cite{feferman91reflecting,reinhardt86remarks,cantini90theory}
axiomatizes the fixed points in
classical logic,
producing the famous inner/outer mismatch:
KF proves the Liar's excluded middle
while proving that neither disjunct is true.
Reinhardt~\cite{reinhardt86remarks} proposed
repairing the mismatch instrumentally,
asserting only what KF proves true --
an r.e., gappy, provability-grounded
collection of significant sentences,
recently analyzed by Castaldo and
Stern~\cite{castaldo23reinhardt}.
\rga realizes that program natively:
the r.e.\ gappy provability-grounded set is
not extracted from a classical instrument
but is the semantics itself,
self-hosted,
with soundness and consistency
machine-checked.
The inner/outer mismatch
never arises,
because \rga is gappy throughout.

Partial Kripke--Feferman
(PKF)~\cite{halbach06axiomatizing} is
the closest axiomatic relative:
like \rga, PKF works inside
the paracomplete logic,
with no classical outer layer.
PKF retains the $\omega$-style
quantifier clause,
so PKF's intended semantics remains
$\Pi^1_1$;
but PKF's proof-theoretic behavior is
predictive for \rga's.
Halbach and Horsten,
and later Halbach and
Nicolai~\cite{halbach06axiomatizing,halbach18costs},
locate the cost of nonclassical truth in
induction over semantic vocabulary:
induction is usable only for
provably determinate formulas.
The diagnosis converges with
\rga's \emph{habeas quid} discipline from
the opposite direction:
what PKF diagnoses as a cost,
\gd adopts as the design principle
(\cref{sec:bg:gd}),
and \rga's boolean type check
`$a \jbool$' is precisely PKF's
determinateness in judgment form.
Friedman--Sheard~\cite{friedman87axiomatic}
and Halbach's disquotational
PUTB~\cite{halbach09reducing}
chart nearby territory --
the former a warning that
classical compositional truth rules
force $\omega$-inconsistency,
the latter evidence that
thin, r.e., schema-based systems
can carry substantial strength.

McGee's theorem~\cite{mcgee85truthlike}
and L\"ob's theorem~\cite{lob55solution}
are the two rocks any such design
must steer past.
McGee shows that T-introduction,
consistency, T-distribution,
and the instances-to-universal inference
jointly force $\omega$-inconsistency.
\rga never asserts
the instances-to-universal direction:
universal introduction consumes
one schematic proof,
not $\omega$-many instance facts,
and the soundness theorem --
truth in the standard naturals --
doubles as the formal certificate that
McGee's combination is underivable.
L\"ob's theorem forbids
an unrestricted internal
provability-implies-truth axiom;
\rga's reflection accordingly lives in
rule form,
in the semantics' certificate clauses
and the quantifier rules,
never as an internal implication.

\subsection{Combinatory Logic and the R.E. Design Goal}
\label{sec:related:fitch}

\rga's syntactic architecture --
untyped combinators,
$\lambda$ as sole binder,
the quantifier as an applied constant --
descends from Church~\cite{church32set,church40formulation}
and Curry~\cite{curry30grundlagen}.
Church's simple type theory made
quantification a constant $\Pi$
applied to an abstraction,
the architecture of every HOL-family
proof assistant since --
including the Isabelle/HOL
this work is mechanized in.
The untyped ancestors were
inconsistency minefields:
Kleene and Rosser felled
Church's original
system~\cite{kleene35inconsistency},
and Curry's paradox~\cite{curry42inconsistency}
distilled the mechanism.
Curry's illative tradition survived by
restricting the quantifier constants'
formation rules.
\rga instead keeps the syntax
unrestricted and moves the discipline
into grounded semantics and
\emph{habeas quid} obligations:
paradoxical instances are expressible
but gappy.
One reasonable genealogical perspective is that 
\rga is Curry's illative combinatory logic
with grounded semantics.

Fitch's basic logic is
the tradition's most direct
anticipation of \rga's goal.
Fitch's system K~\cite{fitch42basic}
was designed so that theoremhood is
exactly r.e. --
``constructive definability is
recursive enumerability'' --
and to stay r.e.\ the system renounced
negation and the universal quantifier.
Fitch's extension with
negation and quantifiers~\cite{fitch48extension}
lost recursive enumerability outright.
Eighty years on,
the arc completes:
\rga keeps grounded negation and
a universal quantifier
while staying r.e.,
by grounding the quantifier in
reflected proof search rather than
in an infinitary closure condition.

\subsection{The \texorpdfstring{$\omega$}{omega}-Rule and Reflection}
\label{sec:related:reflection}

The premise shape of
\rga's universal introduction has
a classical ancestor in
the restricted $\omega$-rule of
Shoenfield and
Sch\"utte~\cite{shoenfield59omega}:
infer the universal from
a recursive enumeration of
instance proofs.
Iterated, the restricted rule is
complete for true arithmetic --
and therefore not r.e.:
recognizing a correct recursive
$\omega$-proof requires checking
well-foundedness of
an infinite coded tree,
a $\Pi^1_1$-complete task.
\rga's rule replaces
``an effective enumeration of
instance proofs exists''
with
``the system itself proves
the schematic instance'' --
one finite, primitive-recursively
checkable proof.
The demotion is deliberate:
\rga is provably incomplete for
classical arithmetic truth
(\cref{sec:omega}),
and complete for
grounded truth instead
(\cref{thm:iff}).

Reflection principles offer
the other classical comparison.
Turing's ordinal
logics~\cite{turing39ordinals} and
Feferman's transfinite
progressions~\cite{feferman62transfinite}
iterate reflection statements
along ordinal notations,
achieving completeness through
an external transfinite scaffolding;
Kreisel and L\'evy~\cite{kreisel68reflection}
calibrate reflection against
transfinite induction.
\rga's reflected rules are
uniform-reflection-shaped --
from a provable universal to
each instance --
but folded into
a single finitary system,
with no ordinal parameter
and no external hierarchy.
The strength \rga thereby attains,
measured against
the iterated-reflection and
transfinite-induction hierarchies,
is a natural question
this paper leaves open.

Willard's self-verifying
systems~\cite{willard01self}
evade G\"odel's second theorem
from the opposite direction:
weakening arithmetic
(dropping the provable totality
of multiplication)
while keeping classical logic.
\rga inverts the trade --
full recursive-function
representation and
provably total multiplication
(\cref{thm:addmult}),
with the classical logical
commitments weakened instead --
which is the folklore-checklist
point of \cref{sec:intro}.

\subsection{Constructive Semantics of Quantification}
\label{sec:related:bhk}

The BHK reading --
a proof of a universal is
a method producing a proof of
each instance --
is \rga's semantic intuition.
The constructive tradition's
central objection to
fixing that method space is due to
Dummett~\cite{dummett63godel}:
identify proofs with derivations in
one formal system,
and G\"odel incompleteness
manufactures a sentence that is
evidently true but unprovable,
refuting the identification.
\rga fixes the formal system anyway
and answers the objection by
giving up bivalence rather than
the meaning-explanation.
The manufactured sentence comes out
ungrounded --
neither true nor false,
like the Liar --
rather than true-but-unprovable
(\cref{sec:omega}).
Kleene's realizability~\cite{kleene45interpretation}
makes the BHK method
a partial recursive function,
but the resulting notion of truth is
far from effective --
a realizer's correctness is not
arithmetically checkable in general --
whereas \rga's certificates are
finite proofs with
a primitive-recursive checker.
The Markov-flavored traits of
\cref{sec:character:markov} place \rga
nearer the Russian constructivist
school than to intuitionism proper,
with the distinction that
\rga's witness property is
a consequence of the semantics,
not a postulate.

Finally, the logic-of-partial-terms
tradition supplies
the quantifier guards.
Beeson's treatment of
definedness~\cite{beeson85foundations}
has quantifiers range over
defined values only --
the direct precedent for
the `$a \jnat$' premise of
universal elimination --
and Aczel's Frege
structures~\cite{aczel80frege} guard
propositionhood of a universal by
propositionhood of the instances,
the ancestor of \rga's
boolean-typing question for
quantified formulas.
Feferman's definedness
logic~\cite{feferman95definedness}
is the same discipline in
axiomatic form.
\gd's contribution to this lineage is
making the guards computational:
definedness is termination,
dynamically checked by operational-semantic evaluation,
and proved by \emph{habeas quid} obligations.

\subsection{Mechanized Metatheory}
\label{sec:related:mech}

G\"odel's incompleteness theorems have
been machine-checked in
classical settings --
Paulson in
Isabelle/HOL~\cite{paulson14machine},
O'Connor in Coq~\cite{oconnor05essential} --
formalizing the limitative results of
systems studied from outside.
The present development differs
in genre:
the mechanization is of
a nonclassical system's
complete positive metatheory --
semantics, proof system,
and the exact coincidence between them
(\cref{thm:iff}) --
with the limitative result
(\cref{thm:omega}) obtained
inside the same framework,
and with the mechanization itself
repeatedly redirecting the design
(\cref{sec:formal:notes}).

%% file: concl.tex
\section{Conclusion}
\label{sec:concl}

This paper has presented \rga,
a reflective grounded arithmetic
that satisfies the folklore ingredient list
for which we expect G\"odel's incompleteness theorems to apply:
arithmetical reasoning including
provably-total addition and multiplication,
quantification over the natural numbers,
recursively-enumerable theorems,
and full \re\ self-reference.
\rga nevertheless remains consistent,
sound, and complete for
\rga's own notion of truth.
The enabling move is a single one,
applied uniformly:
ground every commitment in computation.
Equality is grounded in evaluation,
totality claims in termination,
and -- the step this paper adds --
universal quantification in
the system's own reflected proof search.
The reflective quantifier keeps
grounded truth recursively enumerable,
and the price is paid exactly once,
at the r.e.\ boundary:
$\omega$-incompleteness,
with the classical
true-but-unprovable sentence
relocated to an ungrounded one.
Every result is machine-checked,
and the mechanization served as
an active collaborator in the design,
refuting two natural formulations
and forcing a third
(\cref{sec:formal:notes}).

The grounded-deduction program
continues beyond this paper.
One sequel concerns what \rga
looks like from outside:
semantic completions that decide
what reflection leaves ungrounded,
and the classical principles --
omniscience-flavored ones among them --
whose consistency with \rga
such completions establish.
Another concerns deliberately
unsound-but-consistent extensions
and the model theory that
tames them.
A third is proof-theoretic:
locating \rga and the \rga-like systems
on the ordinal-analysis map
that \cref{sec:related:reflection}
left open.
And nearest at hand,
the grounded systems are converging on
a shared, machine-checked rule interface
(\cref{sec:character:gd}),
pointing toward
a uniform metatheory of
the \gd family as a whole.

The broader claim of
the grounded-deduction program is that
unconstrained recursive definition --
the everyday freedom of programming --
can live inside consistent formal reasoning.
\rga extends that claim
from propositional to quantified arithmetic.
\gd's freedom of recursive definition survives induction,
totality theorems,
and a completeness theorem,
in an effective system that
knows its own limits.

%% file: formal.tex
\section{The Formalization}
\label{sec:formal}

Every result in this paper is machine-checked in
Isabelle/HOL~\cite{nipkow02isabelle}
(Isabelle2025-2),
as part of the grounded-deduction project's
formalization session.
The theories underlying this paper's results
contain no unproven obligations
and add no axioms to HOL:
the trusted base is the Isabelle kernel.
Throughout the paper,
each theorem header names
the corresponding Isabelle fact,
so the formal development can be
audited claim by claim.
This section maps the paper onto the theories,
records the development's shape and size,
and notes what machine-checking
contributed beyond assurance.

\subsection{Theory Map and Statistics}
\label{sec:formal:map}

The \rga-specific development comprises
fifteen theories, roughly 16{,}000 lines,
with about 800 named theorems and lemmas:

\begin{center}
\begin{tabular}{ll}
\textbf{Paper section} & \textbf{Theories} \\
\hline
\cref{sec:system} (syntax, rules, semantics) &
  \texttt{RGA\_Syntax}, \texttt{RGA\_Proof},
  \texttt{RGA\_Semantics} \\
\cref{sec:metatheory} (metatheory) &
  \texttt{RGA\_Complete}, \texttt{RGA\_Sound} \\
\cref{sec:power} (compiler, representation) &
  \texttt{RGA\_Compile}, \texttt{RGA\_Primrec},
  \texttt{RGA\_RE}, \texttt{RGA\_Represent} \\
\cref{sec:power} (internal arithmetic) &
  \texttt{RGA\_Demo}, \texttt{RGA\_Ackermann} \\
\cref{sec:omega} ($\omega$-incompleteness) &
  \texttt{RGA\_Omega} \\
\cref{sec:character} (character) &
  \texttt{RGA\_Sound}, \texttt{RGA\_Paradox} \\
(commitment-set layer, see below) &
  \texttt{RGA\_Commit}, \texttt{RGA\_CommitSound} \\
\end{tabular}
\end{center}

These theories build on
the project's shared infrastructure,
developed for \bga and \pga
and reused wholesale:
a generic de Bruijn syntax framework,
the G\"odel-coding typeclass,
the two-argument primitive-recursive calculus
with its verified step-function computability layer
(universal interpreter, diagonal sets,
recursive-enumerability theory),
and the generic proof-list framework
that makes derivability codable by construction.

\subsection{Mechanization Notes}
\label{sec:formal:notes}

A few engineering choices shaped the development.

\paragraph{One syntax datatype, shared.}
Terms across the \gd systems inhabit
a single generic datatype --
a variable constructor plus
an operator constructor carrying
an operator code, a binder count,
and an argument list.
The substitution, lifting, and freshness kit
is proved once against the generic type;
\rga's ten operators are
pretty-constructor definitions over that kit,
and each \gd system instantiates
the same machinery.
Formulas-as-terms falls out:
nothing distinguishes a formula
but the rules that mention it.

\paragraph{Judgments and proofs are data.}
Hypothesis contexts are finite sets
(Isabelle's \texttt{fset}),
and proofs are lists of judgments
checked by a generic validity predicate.
The design is not incidental:
the reflective semantics quantifies over
G\"odel codes of proofs,
so proofs must be first-order data
from the outset --
there is no separate
``arithmetization of syntax'' phase,
because syntax is born arithmetized
through the coding typeclass.

\paragraph{Interfaces as locales.}
The \gd rule interfaces of
\cref{sec:character:gd} are Isabelle locales,
and \rga's instantiations are
\texttt{sublocale} proofs discharged
from the 41 primitive rules.
The same locale discipline organizes
the internal machinery --
notably the witnessed-evaluation architecture
of \cref{sec:metatheory:arch},
where the witnessed and official semantics
are related by a fixpoint collapse.

\paragraph{From commitments to witnesses.}
The witness-preserving architecture of
\cref{sec:metatheory:arch} was not
the first design.
An earlier campaign attacked soundness
proof-theoretically,
through \emph{commitment sets}:
the collection of judgments a derivation is
committed to through step $s$ --
every judgment in a valid proof with
G\"odel code at most $s$,
plus, at each universal introduction,
schematic commitment to all instances of
the proven schema at once --
closed under the transport moves by which
universals travel between judgments.
The syntactic half of that plan succeeded:
\texttt{RGA\_Commit} proves the commitment set
closed under universal elimination
at unchanged step count,
and enumerating the universal-transport routes
surfaced closure clauses
the initial design had not anticipated.
The semantic half stalled,
facing exactly the step-index descent
described in \cref{sec:metatheory:arch}.
The working architecture reinterprets
the commitment idea semantically:
what a certificate commits to becomes
a witness premise on
the evaluation clause itself.
The original commitment-soundness conjecture
then survives as a theorem --
\texttt{RGA\_CommitSound} proves
every committed judgment true,
as a corollary-level development
over the witnessed results,
with the step bound riding along untouched.

\paragraph{What machine-checking contributed.}
Beyond assurance,
the proof assistant repeatedly helped
to redirect the design in productive, informative ways.
The failure of the assignment-based
soundness motive
(\cref{sec:metatheory:sound})
was discovered as
a machine-checked counterexample,
not anticipated.
An earlier, unlifted form of
the semantics' certificate clause
was refuted by
a failed substitution-stability proof,
forcing the certificate lift
that \cref{fig:sem} shows.
An earlier rule set
lacking the conversion rules
was caught by a concrete
grounded-true, underivable formula,
which drove the conversion discipline of
\cref{fig:rules-reflect}.
Each of these discoveries represented a case where
the mechanization identified
a genuine design defect
that informal development
had missed.

\subsection{AI Contribution Statement}
\label{sec:formal:ai}

Substantial parts of this work were
carried out by an AI assistant
(Claude Fable from Anthropic),
working under the direction and review of
the human author.
The AI contributions include
the mechanization of
large parts of the \rga metatheory campaigns --
the soundness and completeness developments,
the verified compiler and
the representation theorem,
the $\omega$-incompleteness construction,
and internal totality proofs --
as well as first drafts of this paper's text.
The human author directed the research
based on intuitions that had formed over a number of years,
established the designs and their revisions,
proved further results,
reviewed and edited throughout,
and bears sole responsibility for
the paper's claims, framing,
and treatment of related work.
The collaboration is documented
in the project repository:
a working notebook records
the campaigns and design decisions,
and individual commits carry
explicit AI co-author attribution.
The formal results themselves are
machine-checked by Isabelle,
so their correctness is
independent of their authorship.

\subsection{Availability}
\label{sec:formal:avail}

The complete mechanically-checked development
will be released
on formal publication of this paper.